\documentclass[10pt,a4paper]{article}
\usepackage[english]{babel}
\usepackage{amssymb}
\usepackage{amsmath}
\usepackage{indentfirst}
\usepackage[dvips]{graphicx}
\usepackage{epsfig}
\usepackage{xcolor}

\usepackage[colorlinks,citecolor=blue,linkcolor=red]{hyperref}

\usepackage{color}



\usepackage{amsthm}
\usepackage{comment}
\usepackage{multirow}  
\usepackage{amsfonts}
\usepackage{mathtools} 
\usepackage{setspace} 


\newtheorem{cor}{Corollary}[section]

\newtheorem{lemma}[cor]{Lemma}
\newtheorem{teo}[cor]{Theorem}

\newcommand{\R}{\mathbb{R}}
\newcommand{\N}{\mathbb{N}}


\setlength{\textwidth}{135mm}
\setlength{\textheight}{195mm}
\addtolength{\hoffset}{-0.7cm}

\numberwithin{equation}{section}

\begin{document}

\title{On an elastic flow for parametrized curves in $\R^{n}$ suitable for numerical purposes}

\author{Paola Pozzi 
\thanks{Fakult\"at f\"ur Mathematik, Universit\"at Duisburg-Essen, 
Thea-Leymann-Stra\ss e 9, 
45127 Essen, Germany, 
\url{paola.pozzi@uni-due.de}} 
}

\date{\today}
\maketitle

\begin{abstract}
In \cite{PS22} a variant of the classical elastic flow  for closed curves in $\R^{n}$ was introduced, that is more suitable for numerical purposes. Here we investigate the long-time properties of such evolution demonstrating that the flow exists globally in time.
\end{abstract}

\noindent \textbf{Keywords:}  fourth order evolution equations, elastic curves, long-time existence.
\\

\noindent \textbf{MSC(2020):} 53E40, 35K55, 35K25
\bigskip



\section{Introduction}

Let $f:I \to \R^{n}$, $f=f(x)$, $I=[0, 2\pi] \simeq S^{1}$, be the parametrisation of a closed (i.e. periodic) regular smooth curve, $ds=|f_{x}| dx$ its length element,
$\vec{\kappa}=f_{ss}$ its curvature vector, $\tau=f_{s}=\partial_{s} f=\frac{f_{x}}{|f_{x}|}$ its unit tangent vector.
Recall that the length $\mathcal{L}$, Dirichlet  $\mathcal{D}$ and bending energy $\mathcal{E}$ are defined by
\begin{align*}
\mathcal{L}(f) &:= \int_{I} ds= \int_{I} |f_{x}| dx, \qquad \mathcal{D}(f):= \frac{1}{2} \int_{I} |f_{x}|^{2} dx,\\
\mathcal{E}(f) & := \frac{1}{2} \int_{I} |\vec{\kappa}|^2 ds =\frac{1}{2} \int_{I} |\vec{\kappa}|^2 |f_{x}| dx.
\end{align*}
The study of evolution equations associated to the bending energy $\mathcal{E}$ has attracted a lot of attention in recent years: for motivation and extended references we refer here simply to \cite{DKS}, which has inspired a lot of the work presented here, and to a recent survey \cite{MPP21}, where several recent results are discussed.

For some positive given $\lambda >0$ we set
\begin{align*}
\mathcal{E}_{\lambda}(f):=\mathcal{E}(f)+ \lambda \mathcal{L}(f) \geq 0, \qquad \mathcal{D}_{\lambda}(f):=\mathcal{E}(f)+ \lambda \mathcal{D}(f) \geq0.
\end{align*}
The corresponding $L^{2}$-gradient flow are given by
\begin{align}\label{flowzero}
f_{t}  = -\nabla_{s}^{2} \vec{\kappa} -\frac{1}{2} |\vec{\kappa}|^{2} \vec{\kappa} +\lambda \vec{\kappa},
\end{align}
 for $\mathcal{E}_{\lambda}$ (cf. \cite{DKS}) and
 \begin{align}\label{1.2}
f_{t}& = -\nabla_{s}^{2} \vec{\kappa} -\frac{1}{2} |\vec{\kappa}|^{2} \vec{\kappa} +\lambda \frac{f_{xx}}{|f_{x}|}\\
&= -\nabla_{s}^{2} \vec{\kappa} -\frac{1}{2} |\vec{\kappa}|^{2} \vec{\kappa} +\lambda  \vec{\kappa} |f_{x}| + \lambda (|f_{x}|)_{s} \tau \notag
\end{align}
for $\mathcal{D}_{\lambda}$ (see Section~\ref{sec21} below). Here $\nabla_{s} \vec{\phi} =\partial_{s} \vec{\phi} -  \langle\partial_{s} \vec{\phi} , \tau\rangle \tau$ denotes the normal component of the derivative with respect to arc length for a given vector field $\vec{\phi}: [0,2\pi] \to \R^{n}$ along the curve.  Note that the flow associated to $\mathcal{E}_{\lambda}$ has a velocity vector that is entirely normal to the curve.

Motivated by  the numerical investigation undertaken in \cite{PS22}, we study  here the long-time existence properties for the $L^{2}$-gradient flow associated to $\mathcal{D}_{\lambda}$.

In \cite{PS22} we demonstrate that $\mathcal{E}_{\lambda}$  and  $\mathcal{D}_{\lambda}$ share common sets of critical points (in a suitable sense). However, from a numerical point of view, the minimization of the energy $\mathcal{D}_{\lambda}$ (via $L^{2}$-gradient flow) presents major advantages. Indeed the presence of a specific tangential component (see \eqref{1.2}) makes it possible to avoid grid-degeneration problems; moreover the numerical analysis is significantly simplified as opposed  to \cite{DD09} (see \cite{PS22} and related discussion in there. In \cite{PS22} one can also find interesting simulations of the evolution~\eqref{1.2}).

Long-time existence properties for the geometric $L^{2}$-gradient flow generated by $\mathcal{E}_{\lambda}$ are well known and investigated in \cite{DKS}. Since $\mathcal{D}$ dominates the length functional, in the sense that $ \mathcal{L}(f) \leq \sqrt{2\pi} \sqrt{2 \mathcal{D}(f)}$, one is inclined to believe that the $L^{2}$- gradient flow for $\mathcal{D}_{\lambda}$ should also exist for all times. However, this must be proved rigorously. This is the purpose of this work. 
Our method of proof is similar to that employed in \cite{DKS}, which is based on $L^{2}$- curvature estimates combined with Gagliardo-Nirenberg type inequalities.
However, our evolution is not geometric (the functional $\mathcal{D}_{\lambda}$ is not invariant under reparametrizations of the curves) and we must take care of the specific tangential component that appears in \eqref{1.2}. Therefore  new arguments are needed. In particular, upon observing  a strong relation between length element and tangential component, we exploit the second order PDE solved by the length element.
Our main result, whose proof is given in Section~\ref{sec3}, is the following:
\begin{teo}\label{mainteo} Let $\lambda \in (0, \infty)$ and  let $f_{0} :I \to \R^{n}$ be smooth and regular.
Assume that for any smooth regular initial data $f_{0}$ the flow \eqref{1.2} exists for some (small) time  $[0,T]$ and is smooth and regular on $[0,T] \times I$. Then:
the  
flow \eqref{1.2}  
has a global solution.
\end{teo}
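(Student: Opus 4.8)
The plan is to run a continuation argument: starting from the short-time solution assumed in the hypothesis, we let $[0,T^{*})$ with $T^{*}\in(0,\infty]$ be the maximal interval on which a smooth regular solution of \eqref{1.2} with $f(0,\cdot)=f_{0}$ exists, suppose for contradiction that $T^{*}<\infty$, and then establish uniform bounds (up to $t=T^{*}$) for $f$ and all its $x$-derivatives; these force $f(t,\cdot)$ to converge smoothly to a smooth regular curve as $t\to T^{*}$, to which the assumed short-time existence can be applied, contradicting maximality. The first, standard step is the energy identity $\tfrac{d}{dt}\mathcal{D}_{\lambda}(f)=-\int_{I}|f_{t}|^{2}\,ds\le 0$, yielding $\mathcal{E}(f(t))\le\mathcal{D}_{\lambda}(f_{0})=:C_{0}$, $\mathcal{D}(f(t))\le C_{0}/\lambda$, hence $\mathcal{L}(f(t))\le\sqrt{2\pi}\sqrt{2\mathcal{D}(f(t))}\le C_{1}$, while $2\pi\le\int_{I}|\vec{\kappa}|\,ds\le\sqrt{\mathcal{L}}\sqrt{2\mathcal{E}}$ gives $\mathcal{L}(f(t))\ge c_{1}>0$. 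In particular the length stays pinched between two positive constants, which is what makes the length-dependent constants in the Gagliardo--Nirenberg interpolation inequalities on curves (used below) uniform.

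The genuinely new ingredient is the control of the length element $g:=|f_{x}|$, both from above and, crucially, away from $0$. Writing the flow as $f_{t}=V^{\perp}+\varphi\tau$ with the specific tangential component $\varphi=\lambda(|f_{x}|)_{s}=\lambda g_{s}$ and $V^{\perp}=-\nabla_{s}^{2}\vec{\kappa}-\tfrac12|\vec{\kappa}|^{2}\vec{\kappa}+\lambda g\vec{\kappa}$, the general evolution law $\partial_{t}g=g\,(\partial_{s}\varphi-\langle\vec{\kappa},V^{\perp}\rangle)$ becomes, since $\partial_{s}\varphi=\lambda g_{ss}$, a quasilinear second order \emph{parabolic} equation for $g$,
\begin{equation*}
\partial_{t}g=\lambda\,g\,g_{ss}-g\,\langle\vec{\kappa},V^{\perp}\rangle ,
\end{equation*}
uniformly parabolic as long as $g$ is bounded above and below. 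Evaluating this at a spatial maximum, resp. minimum, of $g$ (where $g_{x}=0$, $g_{xx}\le 0$ resp. $g_{xx}\ge 0$), expanding $-g\langle\vec{\kappa},V^{\perp}\rangle=g\langle\vec{\kappa},\nabla_{s}^{2}\vec{\kappa}\rangle+\tfrac12 g|\vec{\kappa}|^{4}-\lambda g^{2}|\vec{\kappa}|^{2}$ (the last term has a favourable sign), and using the $L^{\infty}$-bounds on $\vec{\kappa},\nabla_{s}^{2}\vec{\kappa}$ provided by the curvature estimates of the next step together with $g_{\min}\le\mathcal{L}/(2\pi)\le C_{1}/(2\pi)$, the parabolic maximum principle and Gronwall's lemma on the finite interval $[0,T^{*})$ give $0<c\le g\le C$ on $[0,T^{*})\times I$; parabolic Schauder/energy estimates for the displayed equation then upgrade this to uniform bounds on finitely many $x$-derivatives of $g$.

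Next we adapt the Dziuk--Kuwert--Schätzle curvature machinery of \cite{DKS}. For each $m\ge 0$ one derives a differential inequality
\begin{equation*}
\frac{d}{dt}\int_{I}|\nabla_{s}^{m}\vec{\kappa}|^{2}\,ds+\int_{I}|\nabla_{s}^{m+2}\vec{\kappa}|^{2}\,ds\le c\Big(1+\int_{I}|\nabla_{s}^{m}\vec{\kappa}|^{2}\,ds\Big),
\end{equation*}
where $c$ depends only on $m$, $\lambda$, the bounds for $\mathcal{E},\mathcal{L},\mathcal{L}^{-1}$, and the $L^{\infty}$-bounds for $g$, $g^{-1}$ and finitely many of its derivatives. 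The terms not present in \cite{DKS} come from the coefficient $\lambda g$ multiplying $\vec{\kappa}$ in $V^{\perp}$ and from the tangential transport $\lambda g_{s}\tau$ (which produces $\lambda g_{s}\,\partial_{s}(\nabla_{s}^{m}\vec{\kappa})$-type commutator terms); after integration by parts they are absorbed into the good term $\int_{I}|\nabla_{s}^{m+2}\vec{\kappa}|^{2}\,ds$ by Gagliardo--Nirenberg interpolation on curves and the bounds on $g$. Starting from $\int_{I}|\vec{\kappa}|^{2}\,ds=2\mathcal{E}\le 2C_{0}$ and applying Gronwall on $[0,T^{*})$ inductively in $m$, we obtain $\sup_{[0,T^{*})}\int_{I}|\nabla_{s}^{m}\vec{\kappa}|^{2}\,ds<\infty$ for all $m$, hence uniform $C^{k}$-bounds for the curvature (in $s$, and then in $x$ using the $g$-bounds), hence for $f_{x}$; finally $f$ itself stays bounded because $|f_{t}|=|V|$ is uniformly bounded on $[0,T^{*})\times I$ and $T^{*}<\infty$. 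This yields the smooth convergence $f(t,\cdot)\to f(T^{*},\cdot)$ and closes the continuation argument.

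The main obstacle is the interdependence of the last two steps: the curvature inequalities require a priori control of $g$ and of several of its derivatives, which enter the coefficients, while the maximum-principle and Schauder bounds for $g$ require $L^{\infty}$-control of $\vec{\kappa}$ and $\nabla_{s}^{2}\vec{\kappa}$, which in turn comes from the curvature $L^{2}$-estimates of sufficiently high order. This circularity has to be broken by a simultaneous bootstrap on $[0,T^{*})$ — a continuity argument in which the two families of bounds are propagated together and the regularity order is increased step by step from a short-time base case — and it is precisely here that one exploits the structural fact that, thanks to the tangential component chosen in \cite{PS22}, the length element satisfies a genuine \emph{second order parabolic} equation rather than a mere transport/ODE relation; this is what makes the non-geometric flow \eqref{1.2} amenable to the same circle of ideas as the geometric flow \eqref{flowzero}. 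Once the circularity is resolved and the uniform bounds are in place, maximality of $T^{*}$ is contradicted and hence $T^{*}=\infty$.
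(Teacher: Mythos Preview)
Your overall architecture (continuation argument, energy decay, DKS-type curvature estimates, parabolic equation for $g=|f_{x}|$) matches the paper, but there is a genuine gap precisely where you flag the ``main obstacle'': you recognise the circularity between the curvature estimates and the $g$-bounds and then simply assert that it ``has to be broken by a simultaneous bootstrap'', without supplying the mechanism. A continuity argument of the type you sketch cannot work as stated, because your maximum-principle step for $g$ requires pointwise control of $\nabla_{s}^{2}\vec{\kappa}$, which only follows from high-order $L^{2}$ curvature bounds, which in turn need the $g$-bounds.

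The paper breaks the circularity \emph{before} any maximum principle for $g$, via two observations you do not make. First, the energy identity is finer than you write: it reads $\tfrac{d}{dt}\mathcal{D}_{\lambda}=-\int_{I}|\vec{V}|^{2}\,ds-\int_{I}\varphi^{2}\,ds$, so that $\int_{0}^{t}\|\varphi\|_{L^{2}}^{2}\,dt'\le\mathcal{D}_{\lambda}(f_{0})$. Second, Sobolev embedding applied to $|f_{x}|$ together with $\varphi=\lambda(|f_{x}|)_{s}$ gives $\||f_{x}|\|_{L^{\infty}}\le C(1+\|\varphi\|_{L^{2}})$. In the differential inequality for $\int_{I}|\nabla_{s}\vec{\kappa}|^{2}\,ds$ every appearance of $\||f_{x}|\|_{L^{\infty}}$ is then replaced by $C(1+\|\varphi\|_{L^{2}})$, and Gronwall closes because the coefficient $\|\varphi\|_{L^{2}}^{2}$ has bounded time integral --- no lower bound on $g$ is needed at this stage. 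Only after this first curvature step (giving $\|\vec{\kappa}\|_{L^{\infty}}\le C$ and $\int_{0}^{t}\|\nabla_{s}^{3}\vec{\kappa}\|_{L^{2}}^{2}\,dt'\le C$) does the paper bound $\|\varphi\|_{L^{2}}$ uniformly via the PDE for $\varphi$, hence $\||f_{x}|\|_{L^{\infty}}$ uniformly, and only then apply the maximum principle you describe to get $g\ge c>0$ (using merely $\int_{0}^{t}\|\vec{V}\|_{L^{\infty}}\,dt'<\infty$, which is softer than the pointwise $\nabla_{s}^{2}\vec{\kappa}$-control you invoke). Thereafter the paper runs an explicit induction alternating between $\|\nabla_{s}^{m+1}\vec{\kappa}\|_{L^{2}}$ and $\|\partial_{s}^{m}\varphi\|_{L^{2}}$, which is your bootstrap made concrete.
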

This results hinges on a short-time existence result for the flow, which is outside the scope of the is paper and will be tackled elsewhere. On this matter let us here only remark, that a short-time existence results holds for \eqref{flowzero} (see \cite[\S~3]{DKS}, and also \cite{DLP-STE} where classical techniques are discussed in detail). The differences between \eqref{flowzero} and \eqref{1.2}  are to be found only in the lower order term multiplying $\lambda$, therefore it is safe to assume 
that a short-time existence result holds also in our setting. 
However, note that since the flow \eqref{1.2} is no longer geometric,  one can not factor out the degeneracy of the high order operator $\nabla_{s}^{2}$ in the usual way (i.e. by reparametrization), therefore some extra care must be taken in the arguments.

\bigskip
\noindent \textbf{Acknowledgements:} This project has been funded by the Deutsche Forschungsgemeinschaft (DFG, German Research Foundation)- Projektnummer: 404870139. 


\section{Preliminaries}\label{sec1}

We can write the (non-geometric) flow \eqref{1.2} as
$$ \partial_{t}f =\vec{V} + \varphi \tau$$
with normal component of the velocity vector given by
\begin{align}\label{V}
 \vec{V}:=-\nabla_{s}^{2} \vec{\kappa} -\frac{1}{2} |\vec{\kappa}|^{2} \vec{\kappa} +\lambda  \vec{\kappa} |f_{x}| =-\nabla_{s}^{2} \vec{\kappa} -\frac{1}{2} |\vec{\kappa}|^{2} \vec{\kappa} +\lambda \vec{w} 
 \end{align}
  where $$ \vec{w}:= \vec{\kappa} |f_{x}|,$$
and (scalar) tangential component
\begin{align}\label{tc}
 \varphi:=\lambda \langle \frac{f_{xx}}{|f_{x}|} , \tau \rangle = \lambda  \langle \frac{f_{xx}}{|f_{x}|^{2}} , f_{x}\rangle
= \lambda \frac{1}{|f_{x}|} \left( \frac{|f_{x}|^{2}}{2} \right)_{s} =\lambda (|f_{x}|)_{s}.
\end{align}
Here and in the following $\langle , \rangle$ denotes the Euclidean scalar product in $\R^{n}$.

\subsection{Evolution of geometric quantities}
\label{sec21}

For $\vec{\phi}$ any smooth normal field along $f$ and $h$ a scalar map we have that for any $m \in \N$ 
\begin{align}\label{cs1}
&\nabla_{s}(h \vec{\phi}) = (\partial_{s } h ) \vec{\phi} + h\nabla_{s} \vec{\phi}, \qquad \qquad \nabla_{s}^{m}(h \vec{\phi}) = \sum_{r=0}^{m}\binom{m}{r} \partial_{s}^{m-r}h \nabla_{s}^{r} \vec{\phi}
\end{align}
where recall that $\nabla_s \vec{\phi} = \partial_s \vec{\phi} - \langle \partial_s \vec{\phi}, \partial_s f \rangle \partial_s f$.
Similarly we write 
$\nabla_t \vec{\phi} = \partial_t \vec{\phi} - \langle \partial_t \vec{\phi}, \partial_s f \rangle \partial_s f$.

\begin{lemma}[\textbf{Evolution of geometric quantities}]\label{lemform} 
Let $f:[0, T)\times I \rightarrow \mathbb{R}^{n}$ be a smooth solution of $\partial_{t} f = \vec{V} + \varphi \tau$ for $t \in (0, T)$ with $\vec{V}$ the normal velocity. Given $\vec{\phi}$ any smooth normal field along $f$, the following formulas hold.
\allowdisplaybreaks{\begin{align}
\label{a}
\partial_{t}(ds)&=(\partial_{s} \varphi - \langle \vec{\kappa}, \vec{V} \rangle) ds \\
\label{b}
\partial_{t} \partial_{s}- \partial_{s}\partial_{t} &= (\langle \vec{\kappa}, \vec{V} \rangle -\partial_{s} \varphi) \partial_{s}\\
\label{c}
\partial_{t} \tau &= \nabla_{s} \vec{V} + \varphi \vec{\kappa}\\
\label{d}
  \partial_{t} \vec{\phi}&= \nabla_{t} \vec{\phi}- \langle \nabla_s \vec{V} + \varphi \vec{\kappa}, \vec{\phi} \rangle \tau \\ 
\label{e0}
\partial_{t} \vec{\kappa} & = \partial_{s} \nabla_{s} \vec{V} + \langle \vec{\kappa}, \vec{V}\rangle  \vec{\kappa} + \varphi \partial_{s} \vec{\kappa} \\
\label{e}
\nabla_{t} \vec{\kappa}& = \nabla_{s}^{2} \vec{V} + \langle \vec{\kappa}, \vec{V}\rangle  \vec{\kappa} + \varphi \nabla_{s} \vec{\kappa}\\
\label{f}
(\nabla_{t}\nabla_{s}- \nabla_{s}\nabla_{t}) \vec{\phi} &=(\langle \vec{\kappa},\vec{V} \rangle -\partial_{s }\varphi) \nabla_{s}\vec{\phi} +[\langle \vec{\kappa}, \vec{\phi}\rangle \nabla_{s}\vec{V} -\langle \nabla_{s} \vec{V}, \vec{\phi} \rangle \vec{\kappa}]. 
\end{align}} 
\end{lemma}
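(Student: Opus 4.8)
The plan is to derive these six formulas from a common computational core, following the standard approach for geometric evolution equations (as in \cite{DKS}), but keeping careful track of the tangential term $\varphi\tau$, which is the only genuine difference from the purely normal-velocity case. The starting point is the identity $\partial_t |f_x|^2 = 2\langle \partial_t f_x, f_x\rangle = 2\langle \partial_x(\vec V + \varphi\tau), f_x\rangle$. Expanding $\partial_x(\vec V + \varphi\tau)$ and using $\partial_x = |f_x|\partial_s$, $\partial_s \tau = \vec\kappa$, $\langle \vec V, \tau\rangle = 0$ (so $\langle \partial_s \vec V, \tau\rangle = -\langle \vec V, \vec\kappa\rangle$), and $\langle \vec\kappa, \tau\rangle = 0$, one obtains $\partial_t |f_x| = (\partial_s\varphi - \langle\vec\kappa,\vec V\rangle)|f_x|$, which is \eqref{a}. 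Formula \eqref{b} is the commutator consequence: $\partial_t\partial_s = \partial_t\big(\tfrac{1}{|f_x|}\partial_x\big) = -\tfrac{\partial_t|f_x|}{|f_x|}\partial_s + \tfrac{1}{|f_x|}\partial_x\partial_t = (\langle\vec\kappa,\vec V\rangle - \partial_s\varphi)\partial_s + \partial_s\partial_t$, using that $\partial_x$ and $\partial_t$ commute.

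Next I would prove \eqref{c} by differentiating $\tau = f_s$ in time: using \eqref{b}, $\partial_t\tau = \partial_t\partial_s f = \partial_s\partial_t f + (\langle\vec\kappa,\vec V\rangle - \partial_s\varphi)\tau = \partial_s(\vec V + \varphi\tau) + (\langle\vec\kappa,\vec V\rangle - \partial_s\varphi)\tau$. Expanding $\partial_s(\vec V + \varphi\tau) = \nabla_s\vec V + \langle\partial_s\vec V,\tau\rangle\tau + (\partial_s\varphi)\tau + \varphi\vec\kappa = \nabla_s\vec V - \langle\vec V,\vec\kappa\rangle\tau + (\partial_s\varphi)\tau + \varphi\vec\kappa$, and the tangential terms cancel, leaving $\partial_t\tau = \nabla_s\vec V + \varphi\vec\kappa$. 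For \eqref{d}, given a normal field $\vec\phi$ (so $\langle\vec\phi,\tau\rangle = 0$), one writes $\partial_t\vec\phi = \nabla_t\vec\phi + \langle\partial_t\vec\phi,\tau\rangle\tau$ and computes $\langle\partial_t\vec\phi,\tau\rangle = -\langle\vec\phi,\partial_t\tau\rangle = -\langle\nabla_s\vec V + \varphi\vec\kappa,\vec\phi\rangle$ using \eqref{c}. Formulas \eqref{e0} and \eqref{e} then follow by applying \eqref{b} and the chain rule to $\vec\kappa = \partial_s\tau$ (respectively its normal version), inserting \eqref{c}: $\partial_t\vec\kappa = \partial_t\partial_s\tau = \partial_s\partial_t\tau + (\langle\vec\kappa,\vec V\rangle - \partial_s\varphi)\partial_s\tau = \partial_s(\nabla_s\vec V + \varphi\vec\kappa) + (\langle\vec\kappa,\vec V\rangle - \partial_s\varphi)\vec\kappa$, and expanding $\partial_s(\varphi\vec\kappa) = (\partial_s\varphi)\vec\kappa + \varphi\partial_s\vec\kappa$ gives \eqref{e0} after cancellation; \eqref{e} is its normal projection, using $\nabla_s\vec\kappa = \partial_s\vec\kappa + |\vec\kappa|^2\tau$ and \eqref{d}-type bookkeeping.

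Finally, \eqref{f} is the $\nabla_t$--$\nabla_s$ commutator on a normal field. I would compute $\nabla_t\nabla_s\vec\phi$ and $\nabla_s\nabla_t\vec\phi$ by unwinding the projections: write $\nabla_s\vec\phi = \partial_s\vec\phi + \langle\vec\phi,\vec\kappa\rangle\tau$ (since $\langle\partial_s\vec\phi,\tau\rangle = -\langle\vec\phi,\vec\kappa\rangle$), apply $\partial_t$, use \eqref{b} to swap $\partial_t\partial_s$, use \eqref{c} for $\partial_t\tau$ and the evolution of $\langle\vec\phi,\vec\kappa\rangle$ via \eqref{e0}, and then project back onto the normal bundle. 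The terms involving $\partial_t\vec\phi$ itself organize into $\nabla_s\nabla_t\vec\phi$ by the analogous static identity, and what survives is precisely the claimed $(\langle\vec\kappa,\vec V\rangle - \partial_s\varphi)\nabla_s\vec\phi + [\langle\vec\kappa,\vec\phi\rangle\nabla_s\vec V - \langle\nabla_s\vec V,\vec\phi\rangle\vec\kappa]$.

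The main obstacle is purely bookkeeping: in the non-geometric setting every formula carries an extra $\varphi$-term, and one must be scrupulous about which quantities are normal and which are tangential when commuting $\partial_t$ with $\partial_s$ versus $\nabla_t$ with $\nabla_s$. There is no conceptual difficulty — the only inputs are the product rule, $\partial_s\tau = \vec\kappa$, $\langle\vec\kappa,\tau\rangle = 0$, $\langle\vec V,\tau\rangle = 0$, and the commutator \eqref{b} — but the normal/tangential splitting in \eqref{d} and \eqref{f} must be tracked term by term to ensure the tangential contributions cancel correctly and the asserted normal identities emerge.
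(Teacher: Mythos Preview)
Your proposal is correct and follows exactly the standard computational route the paper has in mind: the paper's own proof simply says the formulas follow by straightforward computation and cites \cite[Lemma~2.1]{DKS} and \cite[Lemma~2.1]{DP}, which carry out precisely the derivations you outline. There is no discrepancy in approach or content.
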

\begin{proof}
The proof follows by straightforward computation: see for instance \cite[Lemma 2.1]{DKS} or \cite[Lemma 2.1]{DP}. 
\end{proof}

The above lemma  holds in fact for \emph{any}  smooth evolution equation that can be written in the form $\partial_{t} f = \vec{V} + \varphi \tau$  with $\vec{V}$ the normal velocity. It  allows  to  compute the first variation of $\mathcal{D}_{\lambda}$ and derive \eqref{1.2} (cf.  \cite[Lemma~A.1]{DP}). Another important application is the following verification  that the energy $\mathcal{D}_{\lambda}$ decreases along the flow.

\smallskip

\noindent\textbf{Decrease in energy along the flow.}
To retrieve some  fundamental bounds it is important to notice that the energy decreases along the flow \eqref{1.2}.
Precisely, using \eqref{e}, \eqref{a}, and integration by parts we obtain
\begin{align*}
\frac{d}{dt} \mathcal{D}_{\lambda} (f) &=
\int_{I} \langle \vec{\kappa}, \nabla_{s}^{2} \vec{V} + \langle \vec{\kappa}, \vec{V}\rangle  \vec{\kappa} + \varphi \nabla_{s} \vec{\kappa} \rangle ds + \int_{I} \frac{1}{2} |\vec{\kappa}|^{2}
(\partial_{s} \varphi - \langle \vec{\kappa}, \vec{V} \rangle) ds + \lambda \int_{I} \langle f_{x}, f_{xt}\rangle dx\\
&= \int_{I} \langle \vec{\kappa}, \nabla_{s}^{2} \vec{V} + \frac{1}{2}\langle \vec{\kappa}, \vec{V}\rangle \vec{\kappa} \rangle ds  - \int_{I} \langle \lambda f_{xx}, f_{t} \rangle dx\\
&= \int_{I} \langle \nabla_{s}^{2} \vec{\kappa} +\frac{1}{2} |\vec{\kappa}|^{2} \vec{\kappa} , \vec{V}
\rangle ds -  \int_{I} \langle( \varphi |f_{x}| \tau + \lambda \vec{\kappa} |f_{x}|^{2} ), (\vec{V} +\varphi \tau) \rangle dx\\
&= -\int_{I} |\vec{V}|^{2} ds - \int_{I} \varphi^{2} ds \leq 0.
\end{align*}
\noindent \textbf{Uniform bounds along the flow.}
As a consequence of the energy decrease we infer that  the following uniform bounds hold for as long the the flow exists:
\begin{align*}
 \mathcal{D}_{\lambda} (f(t)) &\leq \mathcal{D}_{\lambda} (f(0)),\\
 \|\vec{\kappa}\|_{L^{2}(I)}^{2}(t) &= \int_{I} |\vec{\kappa}|^{2} ds \leq 2 \mathcal{D}_{\lambda} (f(0)),\\
 \mathcal{D}(f(t)) &=\frac{1}{2} \int_{I} |f_{x}|^{2} dx \leq \frac{1}{\lambda}\mathcal{D}_{\lambda} (f(0)),\\
 \mathcal{L}(f(t)) &\leq 2\sqrt{\frac{\pi}{\lambda} \mathcal{D}_{\lambda} (f(0))}.
\end{align*}
Moreover, as observed in \cite[(2.18)]{DKS}, since the curve is closed, the Poincar\'e inequality for $\partial_{s} f=\tau$ implies
\begin{align*}
2\pi  \leq \sqrt{\mathcal{L} (f(t))} \| \vec{\kappa}\|_{L^{2}},
\end{align*}
so that in view of the uniform bound from above for the curvature we obtain a uniform bound for the length from below. Hence along the flow we have that
\begin{align*}
 0< C^{-1} < \mathcal{L}(f(t)) \leq C
\end{align*}
where $C=C(\mathcal{D}_{\lambda} (f(0)), \lambda)$.
Last but not least we have that for any time $t$ where the flow is well defined
\begin{align}\label{wichtig}
\int_{0}^{t} \| \varphi\|_{L^{2}}^{2} dt' + \int_{0}^{t} \| \vec{V}\|_{L^{2}}^{2} dt'=
\int_{0}^{t} \int_{I} \varphi^{2} ds dt' +\int_{0}^{t} \int_{I} |\vec{V}|^{2} ds dt' \leq \mathcal{D}_{\lambda} (f(0)).
\end{align}

\noindent \textbf{PDEs for the length element $|f_{x}|$ and tangential component $\varphi$.}
Next we derive and investigate the evolution equation satisfied by the length element $|f_{x}|$ and the tangential component $\varphi$. Note that in view of \eqref{tc} there is a strong relation between the two of them.

We start by considering the length element.
For as long as $|f_{x}| \geq C >0$, classical embedding theory yields that
\begin{align*}
\|\, |f_{x}|\, \|_{C^{0}[0, 2\pi]} \leq  \int_{I}  ||f_{x}|_{x} | dx + \frac{1}{2\pi} \int_{I} |f_{x}| dx.
\end{align*}
Using the uniform bounds on the  length and the fact that $\lambda$ is a fixed contant, it follows then that
\begin{align}\label{stella1}
\| \,|f_{x}|\, \|_{L^{\infty}} \leq C + \int_{I} \frac{|\varphi|}{\lambda} ds \leq C (1 +  \| \varphi \|_{L^{2}}) 
\end{align}
where $C=C(\mathcal{D}_{\lambda} (f(0)), \lambda)$.

Next, let us have a closer look at the evolution equation satisfied by the length element.
Using the definition of the tangential component \eqref{tc} we can write
\begin{align*}
\partial_{t} (|f_{x}|) &= \langle \tau, f_{tx} \rangle = \varphi_{x} - \langle \vec{\kappa}, f_{t} \rangle |f_{x}|=\lambda ((|f_{x}|)_{s})_{x} - \langle \vec{\kappa}, \vec{V}\rangle |f_{x}|\\
& =\frac{\lambda}{|f_{x}|} (|f_{x}|)_{xx} + \lambda (|f_{x}|)_{x} \left( \frac{1}{|f_{x}|}\right)_{x}
- \langle \vec{\kappa}, \vec{V}\rangle |f_{x}|.
\end{align*}
Note that to apply a maximum principle we would need some uniform bounds on $\vec{V}$ and $\vec{\kappa}$, which at the moment are out of reach.

Now we turn to the tangential component \eqref{tc}. We first explain some useful notation. 
In the following we write $B_{2}^{a,c} (\varphi)$ for any linear combination of terms of type
\begin{align*}
(\partial_{s}^{i_{1}}\varphi ) ( \partial_{s}^{i_{2}}\varphi), \qquad   \text{ with } i_{1}+i_{2} =a \text{ and }
\max i_{j} \leq c
\end{align*}
with universal, constant coefficients. Notice that $a$ records the total number of derivatives and $c$ gives the highest number of derivatives falling on one factor. We have that $ \partial_{s} B_{2}^{a,c} (\varphi)=   B_{2}^{a+1,c+1} (\varphi)$.
Similarly we write $ M_{2}^{a,c}(\langle\vec{\kappa}, \vec{V}\rangle, \varphi)$ for any linear combination of terms of type
\begin{align*}
\partial_{s}^{i_{1}}( \langle \vec{\kappa}, \vec{V} \rangle) \,  \partial_{s}^{i_{2}} \varphi, \qquad   \text{ with } i_{1}+i_{2} =a \text{ and }
\max i_{j} \leq c
\end{align*}
with universal, constant coefficients. Note that $ \partial_{s} M_{2}^{a,c} (\langle\vec{\kappa}, \vec{V}\rangle, \varphi)=   M_{2}^{a+1,c+1}(\langle\vec{\kappa}, \vec{V}\rangle, \varphi)$. 

Using \eqref{b}, the previous computations, and recalling that $\varphi=\lambda(|f_{x}|)_{s}$ we immediately infer 
\begin{align}\label{PDEvarphi}
\partial_{t} \varphi &= \lambda \partial_{t} \partial_{s}  (|f_{x}|) = \lambda \partial_{s} \partial_{t} (|f_{x}|) + (\langle \vec{\kappa}, \vec{V} \rangle - \varphi_{s}) \partial_{s} (\lambda|f_{x}|) \notag\\
&=\lambda \partial_{s} (  \varphi_{x} - \langle \vec{\kappa}, f_{t} \rangle |f_{x}|)
+(\langle \vec{\kappa}, \vec{V} \rangle - \varphi_{s})  \varphi \notag\\
&=\lambda (|f_{x}| \varphi_{s})_{s} - \lambda (\langle \vec{\kappa}, \vec{V} \rangle |f_{x}|)_{s} +(\langle \vec{\kappa}, \vec{V} \rangle - \varphi_{s})  \varphi. 
\end{align}
This gives also
\begin{align*}
\partial_{t} \varphi &= \varphi \varphi_{s} + \lambda |f_{x}| \varphi_{ss} - (\langle \vec{\kappa}, \vec{V} \rangle)_{s} \lambda |f_{x}| - \langle \vec{\kappa}, \vec{V} \rangle \varphi +  \langle \vec{\kappa}, \vec{V} \rangle \varphi -\varphi \varphi_{s} \\
&= \lambda |f_{x}| \varphi_{ss} - (\langle \vec{\kappa}, \vec{V} \rangle)_{s} \lambda |f_{x}|.
\end{align*}
Next we compute in a similar manner
\begin{align}\label{PDEvarphis}
\partial_{t} ( \varphi_{s} ) &= \partial_{t} \partial_{s}\varphi = \partial_{s} \partial_{t} \varphi
+( \langle \vec{\kappa}, \vec{V} \rangle - \varphi_{s}) \varphi_{s}  \notag\\
&= (\lambda |f_{x}| \varphi_{ss})_{s} - \left( (\langle \vec{\kappa}, \vec{V} \rangle)_{s} \lambda |f_{x}|  \right)_{s} +( \langle \vec{\kappa}, \vec{V} \rangle - \varphi_{s}) \varphi_{s}\\
&=(\lambda |f_{x}| \varphi_{ss})_{s} - \left( (\langle \vec{\kappa}, \vec{V} \rangle)_{s} \lambda |f_{x}|  \right)_{s} + M_{2}^{1,1} (\langle\vec{\kappa}, \vec{V}\rangle, \varphi) + B_{2}^{2,1} (\varphi) \notag.
\end{align}
This gives also
\begin{align*}
\partial_{t} ( \varphi_{s} ) &= \lambda |f_{x}|  \partial_{s}^{3} \varphi + \varphi \partial_{s}^{2}\varphi  -\lambda |f_{x}| \partial_{s}^{2} (\langle \vec{\kappa}, \vec{V} \rangle)  
- \varphi \partial_{s} (\langle \vec{\kappa}, \vec{V} \rangle)   + \langle \vec{\kappa}, \vec{V} \rangle \varphi_{s}
- (\varphi_{s})^{2}\\
& =\lambda |f_{x}|  \partial_{s}^{3} \varphi -\lambda |f_{x}| \partial_{s}^{2} (\langle \vec{\kappa}, \vec{V} \rangle)  + M_{2}^{1,1} (\langle\vec{\kappa}, \vec{V}\rangle, \varphi) + B_{2}^{2,2} (\varphi).
\end{align*} 
Proceeding inductively  one finds:
\begin{lemma}
 For any $m \in \mathbb{N}$ we have
\begin{align}\label{PDEvarphims}
\partial_{t} (& \partial_{s}^{m}\varphi )  \notag \\
&=(\lambda |f_{x}|  \partial_{s}^{m+1} \varphi )_{s} 
- \left(  \lambda |f_{x}| \partial_{s}^{m} (\langle \vec{\kappa}, \vec{V} \rangle) \right)_{s} +M_{2}^{m,m} (\langle\vec{\kappa}, \vec{V}\rangle, \varphi) +B_{2}^{m+1,m} (\varphi)\\
&= \lambda |f_{x}|  \partial_{s}^{m+2} \varphi
-  \lambda |f_{x}| \partial_{s}^{m+1} (\langle \vec{\kappa}, \vec{V} \rangle)
 +M_{2}^{m,m} (\langle\vec{\kappa}, \vec{V}\rangle, \varphi) +B_{2}^{m+1,m+1} (\varphi)  . \notag
\end{align}
\end{lemma}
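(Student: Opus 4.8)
The plan is to argue by induction on $m$. The cases $m=0$ and $m=1$ have just been treated, being precisely \eqref{PDEvarphi} and \eqref{PDEvarphis} together with the simplified identities derived immediately afterwards, so suppose \eqref{PDEvarphims} holds for some $m$ and let us deduce it for $m+1$. The one tool needed from Lemma~\ref{lemform} is the commutation rule \eqref{b}, which applied to $u=\partial_s^m\varphi$ reads $\partial_t\partial_s u=\partial_s\partial_t u+(\langle\vec{\kappa},\vec{V}\rangle-\varphi_s)\partial_s u$; hence $\partial_t(\partial_s^{m+1}\varphi)=\partial_s(\partial_t(\partial_s^m\varphi))+(\langle\vec{\kappa},\vec{V}\rangle-\varphi_s)\,\partial_s^{m+1}\varphi$.

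One then substitutes the first line of the inductive hypothesis into $\partial_t(\partial_s^m\varphi)$ and differentiates once more in $s$. The structural point that makes the bookkeeping close up is the identity $\partial_s(\lambda|f_x|)=\varphi$, read off from \eqref{tc}. Because of it, when $\partial_s$ falls on the coefficient $\lambda|f_x|$ of the leading term $(\lambda|f_x|\partial_s^{m+1}\varphi)_s$ it produces a factor $\varphi$, which merges with the surviving $\varphi$-factor into a quadratic term of type $B_2^{\bullet,\bullet}(\varphi)$, and similarly $\partial_s$ on the coefficient of $(\lambda|f_x|\partial_s^{m}\langle\vec{\kappa},\vec{V}\rangle)_s$ produces a term of type $M_2^{\bullet,\bullet}(\langle\vec{\kappa},\vec{V}\rangle,\varphi)$; meanwhile $\partial_s$ on the top-order factor rebuilds the leading terms with one derivative more, namely $(\lambda|f_x|\partial_s^{m+2}\varphi)_s$ and $(\lambda|f_x|\partial_s^{m+1}\langle\vec{\kappa},\vec{V}\rangle)_s$. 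For the lower-order remainders one invokes the bookkeeping rules $\partial_sB_2^{a,c}(\varphi)=B_2^{a+1,c+1}(\varphi)$ and $\partial_sM_2^{a,c}(\langle\vec{\kappa},\vec{V}\rangle,\varphi)=M_2^{a+1,c+1}(\langle\vec{\kappa},\vec{V}\rangle,\varphi)$ recorded above. Finally the term $-\varphi_s\,\partial_s^{m+1}\varphi$ coming from $(\langle\vec{\kappa},\vec{V}\rangle-\varphi_s)\partial_s^{m+1}\varphi$ cancels against a $+\varphi_s\,\partial_s^{m+1}\varphi$ generated when $\partial_s$ hits the coefficient of the leading curvature-free term — exactly the cancellation already visible in passing from \eqref{PDEvarphi} and \eqref{PDEvarphis} to their simplified forms — while $\langle\vec{\kappa},\vec{V}\rangle\,\partial_s^{m+1}\varphi$ joins the $M_2$-remainder. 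Collecting the surviving terms yields the first line of \eqref{PDEvarphims} with $m$ replaced by $m+1$; applying the product rule once more to the two leading terms, again through $\partial_s(\lambda|f_x|)=\varphi$, rewrites $(\lambda|f_x|\partial_s^{m+2}\varphi)_s=\lambda|f_x|\partial_s^{m+3}\varphi+\varphi\,\partial_s^{m+2}\varphi$ and $(\lambda|f_x|\partial_s^{m+1}\langle\vec{\kappa},\vec{V}\rangle)_s=\lambda|f_x|\partial_s^{m+2}\langle\vec{\kappa},\vec{V}\rangle+\varphi\,\partial_s^{m+1}\langle\vec{\kappa},\vec{V}\rangle$ and absorbs the two extra products into the $B_2$- and $M_2$-remainders, giving the second line.

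I do not expect a genuine obstacle here: the argument is pure bookkeeping and the statement is designed so that the two remainder families are stable under $\partial_s$. The points that must be checked rather than merely estimated are (i) that every derivative created by the Leibniz rule lands so that the resulting quadratic term falls into a class $M_2^{\bullet,\bullet}$, $B_2^{\bullet,\bullet}$ with the derivative counts claimed in \eqref{PDEvarphims}, and (ii) the precise cancellation of the two $\varphi_s\,\partial_s^{m+1}\varphi$ contributions; both rest on $\partial_s(\lambda|f_x|)=\varphi$ and on the two displayed rules for $\partial_sB_2$ and $\partial_sM_2$. I would carry the induction keeping the ``divergence form'' (first line of \eqref{PDEvarphims}) throughout, since it is the representation that reproduces itself most transparently under a further application of $\partial_s$, and read off the expanded second line only at the very end.
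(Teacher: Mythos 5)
Your inductive approach is the right one and matches what the paper itself does (it asserts the lemma with only the hint ``Proceeding inductively one finds''): differentiate once, commute $\partial_t$ and $\partial_s$ via \eqref{b}, use $\partial_s(\lambda|f_x|)=\varphi$, and invoke the stability of the $B_2$- and $M_2$-classes under $\partial_s$. You also correctly locate the cancellation between the $+\varphi_s\,\partial_s^{m+1}\varphi$ produced by the Leibniz rule and the $-\varphi_s\,\partial_s^{m+1}\varphi$ coming from the commutator.

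There is, however, one place where your bookkeeping is not tight enough to deliver the statement exactly as printed, and you gloss over it. After the cancellation, $\partial_s\bigl[(\lambda|f_x|\partial_s^{m+1}\varphi)_s\bigr]$ still leaves behind the term $\varphi\,\partial_s^{m+2}\varphi$ (since $(\lambda|f_x|\partial_s^{m+1}\varphi)_{ss}=(\lambda|f_x|\partial_s^{m+2}\varphi)_s+\varphi_s\,\partial_s^{m+1}\varphi+\varphi\,\partial_s^{m+2}\varphi$). This term has highest derivative count $m+2$, so it belongs to $B_2^{m+2,m+2}$ and \emph{not} to $B_2^{m+2,m+1}$. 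Consequently the first (divergence-form) line of \eqref{PDEvarphims} does \emph{not} reproduce itself with the printed exponent $B_2^{m+1,m}$: already passing from $m=1$ to $m=2$ the remainder contains $\varphi\,\partial_s^3\varphi\in B_2^{3,3}\setminus B_2^{3,2}$. The induction does close cleanly for the second (expanded) line, whose $B_2^{m+1,m+1}$ is exactly what the Leibniz calculus produces, and the divergence form then follows with exponent $B_2^{m+1,m+1}$; this is also the form actually used later in the paper (Induction Step -- Part B). So your claim that the first line is ``the representation that reproduces itself most transparently'' is precisely backwards, and your assertion that the surviving quadratic term ``falls into a class with the derivative counts claimed'' is false for the first line as literally stated. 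The fix is minor (read $B_2^{m+1,m+1}$ in the first line), but you should have noticed it when carrying out the counting you yourself flag as the point that ``must be checked rather than merely estimated.''
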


\noindent \textbf{On the curvature vector and its derivatives.}
For geometric terms such as the curvature vector and its derivatives we will make use of the following lemma, which is a straight-forward generalisation of \cite[Lemma 2.2]{DKS}.
\begin{lemma}\label{lempartint}
Suppose $\partial_{t}f =\vec{V}+\varphi \tau$ on $(0,T) \times I$. Let $\vec{\phi}$ be a normal vector field along $f$ and $Y=\nabla_{t} \vec{\phi} + \nabla_{s}^{4} \vec{\phi}$.
Then
\begin{align}\label{eqgen0}
\frac{d}{dt} \frac{1}{2}\int_{I} |\vec{\phi}|^{2} ds + \int_{I}
|\nabla_{s}^{2} \vec{\phi}|^{2 } ds & = 
\int_{I} \langle Y + \frac{1}{2} \vec{\phi} \,  \varphi_{s}, \vec{\phi} \rangle ds - \frac{1}{2} \int_{I} |\vec{\phi}|^{2} \langle \vec{\kappa}, \vec{V} \rangle ds ,
\end{align}
\end{lemma}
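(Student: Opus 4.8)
The plan is to differentiate $\tfrac12\int_I |\vec\phi|^2\, ds$ in time and massage the high-order term by integration by parts until it becomes $\int_I |\nabla_s^2\vec\phi|^2\, ds$ plus the claimed remainder. First I would apply the product rule together with the evolution of the length element \eqref{a}:
\[
\frac{d}{dt}\frac12\int_I |\vec\phi|^2\, ds = \int_I \langle \partial_t \vec\phi, \vec\phi\rangle\, ds + \frac12\int_I |\vec\phi|^2\,(\partial_s\varphi - \langle\vec\kappa,\vec V\rangle)\, ds.
\]
Since $\vec\phi$ is normal, $\langle \partial_t\vec\phi,\vec\phi\rangle = \langle \nabla_t\vec\phi,\vec\phi\rangle$ by \eqref{d} (the correction term is tangential, hence orthogonal to $\vec\phi$). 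Now substitute $\nabla_t\vec\phi = Y - \nabla_s^4\vec\phi$ from the definition of $Y$, which isolates the genuinely fourth-order contribution $-\int_I \langle \nabla_s^4\vec\phi,\vec\phi\rangle\, ds$ and collects everything else into the $\int_I\langle Y + \tfrac12\vec\phi\,\varphi_s,\vec\phi\rangle\, ds$ and $-\tfrac12\int_I |\vec\phi|^2\langle\vec\kappa,\vec V\rangle\, ds$ pieces already visible on the right-hand side of \eqref{eqgen0}.

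The core computation is then to show $-\int_I \langle \nabla_s^4\vec\phi,\vec\phi\rangle\, ds = -\int_I |\nabla_s^2\vec\phi|^2\, ds$, i.e. to integrate by parts twice. The subtlety — and this is where I expect the only real care to be needed — is that integration by parts along the curve is with respect to $ds$, and $\partial_s$ does \emph{not} commute cleanly with the measure: for a scalar $g$ one has $\int_I (\partial_s g)\, ds = \int_I g_x\, dx = 0$ on the closed curve, but when moving $\nabla_s$ across an inner product of normal fields one must remember that $\partial_s\langle\vec a,\vec b\rangle = \langle\nabla_s\vec a,\vec b\rangle + \langle\vec a,\nabla_s\vec b\rangle$ still holds (the tangential corrections cancel), so $\int_I \langle \nabla_s\vec a,\vec b\rangle\, ds = -\int_I\langle\vec a,\nabla_s\vec b\rangle\, ds$ with no boundary term by periodicity. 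Applying this identity twice to $\vec a = \nabla_s^3\vec\phi$, then $\vec a=\nabla_s^2\vec\phi$, against $\vec b$-factors built from $\vec\phi$, yields $-\int_I\langle\nabla_s^4\vec\phi,\vec\phi\rangle\, ds = +\int_I\langle\nabla_s^3\vec\phi,\nabla_s\vec\phi\rangle\, ds = -\int_I\langle\nabla_s^2\vec\phi,\nabla_s^2\vec\phi\rangle\, ds = -\int_I|\nabla_s^2\vec\phi|^2\, ds$, exactly the term appearing on the left of \eqref{eqgen0}.

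Finally I would reconcile bookkeeping: the $\tfrac12 |\vec\phi|^2\partial_s\varphi$ term produced by \eqref{a} is precisely the $\tfrac12\vec\phi\,\varphi_s$ contribution paired against $\vec\phi$ inside the $Y$-integral (writing $\tfrac12\int_I |\vec\phi|^2\varphi_s\, ds = \int_I\langle \tfrac12\vec\phi\,\varphi_s,\vec\phi\rangle\, ds$), and the $-\tfrac12 |\vec\phi|^2\langle\vec\kappa,\vec V\rangle$ term from \eqref{a} is exactly the last term of \eqref{eqgen0}. Collecting, the identity follows. This mirrors \cite[Lemma 2.2]{DKS}; the only genuinely new feature here is the presence of the tangential component $\varphi$, which enters solely through \eqref{a} and \eqref{d} and contributes the harmless $\tfrac12\vec\phi\,\varphi_s$ term — so the "obstacle" is really just the careful, but routine, verification that no further tangential terms survive. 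I would note that regularity and the uniform lower bound $|f_x|\ge C>0$ (so that $\nabla_s$, $ds$, and the integrations by parts are all legitimate) are in force by hypothesis of the lemma.
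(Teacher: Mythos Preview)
Your proposal is correct and follows exactly the approach the paper indicates: differentiate using \eqref{a}, replace $\partial_t\vec\phi$ by $\nabla_t\vec\phi$ via normality of $\vec\phi$, substitute $\nabla_t\vec\phi = Y - \nabla_s^4\vec\phi$, and integrate by parts twice using $\partial_s\langle\vec a,\vec b\rangle = \langle\nabla_s\vec a,\vec b\rangle + \langle\vec a,\nabla_s\vec b\rangle$ on the closed curve. The paper's own proof consists of the single sentence ``The claim follows using \eqref{a} and integration by parts,'' so you have simply written out in full what the paper leaves implicit.
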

\begin{proof}
 The claim follows  using \eqref{a} and integration by parts.
\end{proof}

As in \cite[Sec.~3]{DLP-LTE}, \cite[Lem.2.3]{DKS} and \cite[Sec.~3]{DP} we denote by  $ \vec{\phi}_1 *  \vec{\phi}_2 * \cdots * \vec{\phi}_k$ the product of $k$ normal vector fields $\vec{\phi}_i$ ($i=1,..,k$) defined as 
$\langle \vec{\phi}_1,\vec{\phi}_2 \rangle \cdot .. \cdot\langle \vec{\phi}_{k-2},\vec{\phi}_{k-1} \rangle \vec{\phi}_{k}$
if $k$ is odd and as 
$\langle \vec{\phi}_1,\vec{\phi}_2 \rangle \cdot .. \cdot \langle \vec{\phi}_{k-1},\vec{\phi}_{k} \rangle$, if $k$ is even. 
The expression $P_b^{a,c}(\vec{\kappa})$ stands for any linear combination of terms of the type
\[
 (\nabla_{s}^{i_1} \vec{\kappa})   * \cdots * (\nabla_{s}^{i_b} \vec{\kappa})  \text{ with }i_1 + \ldots + i_b = a \text{ and }\max i_j \leq c
\]
with universal, constant coefficients. Thus $a$ gives the total number of derivatives, $b$ denotes the number of factors and $c$ gives a bound on the highest number of derivatives falling on one factor. Using \eqref{cs1} we observe that for $b \in \mathbb{N}$, $b $  odd, we have
$ \nabla_s P^{a,c}_b (\vec{\kappa})= P^{a+1,c+1}_b (\vec{\kappa}).$
With a slight abuse of notation, 
$|P^{a,c}_b(\vec{\kappa})|$ 
denotes any linear combination with non-negative coefficients of terms of  type 
$$|\nabla_{s}^{i_1} \vec{\kappa}|\cdot |\nabla_{s}^{i_2} \vec{\kappa}| \cdot ... \cdot | \nabla_{s}^{i_{b}} \vec{\kappa}| \mbox{ with }i_1 + \dots +i_{b}= a \mbox{ and } \max i_{j} \leq c \, .$$
Similarly we write 
$Q_b^{a,c}(\vec{\kappa}, \vec{w})$  for any linear combination of terms of the type
\[
 (\nabla_{s}^{i_1} \vec{w}) *(\nabla_{s}^{i_2} \vec{\kappa}) * \cdots * (\nabla_{s}^{i_b} \vec{\kappa} ) \text{ with }i_1 + \ldots + i_b = a \text{ and }\max i_j \leq c
\]
with universal, constant coefficients. Also in this case for odd $b \in \mathbb{N}$ we have
$ \nabla_s Q^{a,c}_b (\vec{\kappa}, \vec{w})= Q^{a+1,c+1}_b (\vec{\kappa}, \vec{w}).$
For sums we write 
\begin{equation}
\sum_{\substack{[[a,b]] \leq [[A,B]]\\c\leq C}} P^{a,c}_{b} (\vec{\kappa}) 
:=\sum_{a=0}^{A}\sum_{b=1}^{2A+B-2a} \sum_{c=0}^C\text{ } P_{b}^{a,c}(\vec{\kappa})
\text{.}
\label{eq:sum_P^a_b}
\end{equation}
Similarly we set
$
\sum_{\substack{[[a,b]] \leq [[A,B]]\\c\leq C}}| P^{a,c}_{b} (\vec{\kappa})| : = \sum_{a=0}^{A} \sum_{b=1}^{2A+B-2a} \sum_{c=0}^{C} | P^{a,c}_{b}(\vec{\kappa}) |\, .
$

With this notation we can state the following result, which relates the operator $\nabla_{s}^{m}$ to the full derivative $\partial_{s}^{m}$. Loosely speaking one can say that  $\partial_{s}^{m} \vec{\kappa}$ and 
$\nabla_{s}^{m} \vec{\kappa}$ ``are the same'' up to lower order terms.
\begin{lemma}
\label{Qlemma}
We have the identities
\begin{align*}
& \partial_{s} \vec{\kappa}= \nabla_{s}\vec{\kappa} -|\vec{\kappa}|^{2}\tau,\\
& \partial_{s}^{m} \vec{\kappa} = \nabla_{s}^{m} \vec{\kappa} +\tau \sum_{\substack{[[a,b]] \leq [[m-1,2]]\\c \leq m-1, \ b \   even}} P^{a,c}_{b} 
(\vec{\kappa}) + \sum_{\substack{[[a,b]] \leq [[m-2,3]]\\c \leq m-2\ b \  odd}} P^{a,c}_{b} (\vec{\kappa})  \quad \text{ for } m\geq 2 \, .
\end{align*}
\end{lemma}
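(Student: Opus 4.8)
The plan is to argue by induction on $m$. The case $m=1$ follows by differentiating the orthogonality relation $\langle\vec{\kappa},\tau\rangle=0$: since $\partial_{s}\tau=\vec{\kappa}$ one gets $\langle\partial_{s}\vec{\kappa},\tau\rangle=-\langle\vec{\kappa},\partial_{s}\tau\rangle=-|\vec{\kappa}|^{2}$, and inserting this into $\partial_{s}\vec{\kappa}=\nabla_{s}\vec{\kappa}+\langle\partial_{s}\vec{\kappa},\tau\rangle\tau$ yields the first identity. The same argument applied to an arbitrary normal field $\vec{\phi}$ along $f$ gives the elementary rule $\partial_{s}\vec{\phi}=\nabla_{s}\vec{\phi}-\langle\vec{\phi},\vec{\kappa}\rangle\tau$; together with $\partial_{s}\tau=\vec{\kappa}$, the Leibniz rule, and the identity $\partial_{s}\langle\vec{\phi}_{1},\vec{\phi}_{2}\rangle=\langle\nabla_{s}\vec{\phi}_{1},\vec{\phi}_{2}\rangle+\langle\vec{\phi}_{1},\nabla_{s}\vec{\phi}_{2}\rangle$ for normal $\vec{\phi}_{1},\vec{\phi}_{2}$ (the tangential parts being orthogonal to the factors), these are the only ingredients I will use. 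Differentiating the $m=1$ identity once more and sorting the resulting terms by type produces the asserted expansion for $m=2$ (one finds $\partial_{s}^{2}\vec{\kappa}=\nabla_{s}^{2}\vec{\kappa}-3\langle\nabla_{s}\vec{\kappa},\vec{\kappa}\rangle\tau-|\vec{\kappa}|^{2}\vec{\kappa}$, i.e. a $\tau\,P^{1,1}_{2}(\vec{\kappa})$-term plus a $P^{0,0}_{3}(\vec{\kappa})$-term), which I take as the base case of the induction for $m\geq 2$.

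For the inductive step $m\to m+1$ (with $m\geq 2$) I apply $\partial_{s}$ to the expansion for $m$. The leading term contributes $\partial_{s}\nabla_{s}^{m}\vec{\kappa}=\nabla_{s}^{m+1}\vec{\kappa}-\langle\nabla_{s}^{m}\vec{\kappa},\vec{\kappa}\rangle\tau$, and the second summand is a $\tau\,P^{m,m}_{2}(\vec{\kappa})$-term, which is precisely the extreme term allowed by the bound $[[m,2]]$, $c\leq m$ at level $m+1$. Differentiating a scalar (even-$b$) $P$-term keeps the number of factors and raises $(a,c)$ to $(a+1,c+1)$; differentiating instead the accompanying $\tau$ produces $\vec{\kappa}\,P^{a,c}_{b}(\vec{\kappa})=P^{a,c}_{b+1}(\vec{\kappa})$, flipping the parity of $b$ and leaving $(a,c)$ unchanged. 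Differentiating a vectorial (odd-$b$) $P$-term $g\,\nabla_{s}^{i}\vec{\kappa}$ either keeps $b$ and raises $(a,c)\to(a+1,c+1)$, or — via $\partial_{s}\nabla_{s}^{i}\vec{\kappa}=\nabla_{s}^{i+1}\vec{\kappa}-\langle\nabla_{s}^{i}\vec{\kappa},\vec{\kappa}\rangle\tau$ — produces a $\tau$-term with $b$ raised by one and $(a,c)$ unchanged. Hence every new term is again of one of the two admissible shapes.

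The remaining (and main) task is to check that the index constraints are inherited, namely that the new $\tau$-terms obey $[[a,b]]\leq[[m,2]]$, $c\leq m$, and the new remainder terms obey $[[a,b]]\leq[[m-1,3]]$, $c\leq m-1$. This is a direct computation from the bounds at level $m$ (recalling that $[[a,b]]\leq[[A,B]]$ expands to $a\leq A$ and $b\leq 2A+B-2a$): for example, a $\tau$-part term at level $m$ has $a\leq m-1$, $b\leq 2m-2a$, $c\leq m-1$, and it produces a remainder term with data $(a,b+1,c)$, where $b+1\leq 2m+1-2a=2(m-1)+3-2a$ and $c\leq m-1$, and a $\tau$-term with data $(a+1,b,c+1)$, where $b\leq 2m-2a=2m+2-2(a+1)$ and $c+1\leq m$; the contributions of the remainder part at level $m$ are checked identically, with one derivative and one factor fewer to spare. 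I expect this bookkeeping of the triple indices $(a,b,c)$ and their parities through the Leibniz rule to be the only delicate point; once the three elementary differentiation rules above are isolated, the constraints propagate exactly and the induction closes.
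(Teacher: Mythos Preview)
Your proposal is correct and follows essentially the same route as the paper: the first identity is derived from $\partial_{s}\vec{\kappa}=\nabla_{s}\vec{\kappa}+\langle\partial_{s}\vec{\kappa},\tau\rangle\tau$ together with $\langle\partial_{s}\vec{\kappa},\tau\rangle=-|\vec{\kappa}|^{2}$, and the second is obtained by induction on $m$. The paper merely cites \cite[Lemma~4.5]{DP} and \cite[Lemma~2.6]{DKS} for the inductive step, whereas you spell out explicitly the three differentiation rules and the propagation of the $(a,b,c)$-constraints, which is exactly the bookkeeping those references carry out.
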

\begin{proof} The proof can be found for instance in \cite[Lemma~ 4.5]{DP} (see also \cite[Lemma 2.6]{DKS}).
The first claim is obtained directly using that 
$$\partial_s \vec{\kappa}= \nabla_s \vec{\kappa} + \langle \partial_s \vec{\kappa}, \tau \rangle \tau  = \nabla_s \vec{\kappa} - |\vec{\kappa}|^2 \tau \, .$$ 
The second claim follows by induction.
\end{proof}

We are now able to describe in detail the evolution of the curvature vector and its derivatives. 
\begin{lemma}\label{evolcurvature}
Suppose $\partial_t f= -\nabla_s^2 \vec{\kappa} - \frac12 |\vec{\kappa}|^2 \vec{\kappa} +  \lambda \vec{w}+ \varphi \tau$, where $\lambda =\lambda(t)$. Then for $m  \in \mathbb{N}_0$ we have
\begin{align*} \nabla_t \nabla_s^m \vec{\kappa}+ \nabla_s^4 \nabla_s^m \vec{\kappa}&= P^{m+2,m+2}_3 (\vec{\kappa})+ \lambda (\nabla_s^{m+2} \vec{w} + Q^{m,m}_3 (\vec{\kappa}, \vec{w})) +P^{m,m}_5 (\vec{\kappa}) + \varphi \nabla_s^{m+1} \vec{\kappa}.
\end{align*}
\end{lemma}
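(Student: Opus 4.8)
The plan is to prove the identity by induction on $m$, using the evolution formulas from Lemma~\ref{lemform} together with Lemma~\ref{Qlemma} to convert between $\nabla_s$ and $\partial_s$ and to absorb lower-order geometric terms into the $P$- and $Q$-notation. For the base case $m=0$, I would start from \eqref{e}, namely $\nabla_t \vec{\kappa} = \nabla_s^2 \vec{V} + \langle \vec{\kappa}, \vec{V}\rangle \vec{\kappa} + \varphi \nabla_s \vec{\kappa}$, and substitute $\vec{V} = -\nabla_s^2\vec{\kappa} - \tfrac12|\vec{\kappa}|^2\vec{\kappa} + \lambda\vec{w}$ from \eqref{V}. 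The leading term $\nabla_s^2(-\nabla_s^2\vec{\kappa}) = -\nabla_s^4\vec{\kappa}$ moves to the left-hand side; the term $\nabla_s^2(-\tfrac12|\vec{\kappa}|^2\vec{\kappa})$ expands, via the Leibniz rule \eqref{cs1} for $\nabla_s$ on products, into a sum of terms each a product of three curvature factors with a total of two derivatives and at most two on one factor, i.e.\ $P_3^{2,2}(\vec{\kappa})$; the term $\lambda\nabla_s^2\vec{w}$ is exactly the $\lambda\nabla_s^{2}\vec{w}$ summand (with $Q_3^{0,0}(\vec{\kappa},\vec{w})$ empty for $m=0$); the term $\langle\vec{\kappa},\vec{V}\rangle\vec{\kappa}$ splits into $\langle\vec{\kappa},-\nabla_s^2\vec{\kappa}-\tfrac12|\vec{\kappa}|^2\vec{\kappa}\rangle\vec{\kappa} + \lambda\langle\vec{\kappa},\vec{w}\rangle\vec{\kappa}$, the first piece being $P_3^{2,2}(\vec{\kappa}) + P_5^{0,0}(\vec{\kappa})$ and the second $\lambda Q_3^{0,0}(\vec{\kappa},\vec{w})$; and $\varphi\nabla_s\vec{\kappa}$ is already in the stated form.

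For the inductive step, assuming the formula for $m$, I would apply $\nabla_s$ to both sides of the identity for $\nabla_t\nabla_s^m\vec{\kappa} + \nabla_s^{m+4}\vec{\kappa}$ and then commute $\nabla_s$ past $\nabla_t$ using \eqref{f} with $\vec{\phi}=\nabla_s^m\vec{\kappa}$. The commutator $(\nabla_t\nabla_s - \nabla_s\nabla_t)\nabla_s^m\vec{\kappa}$ contributes $(\langle\vec{\kappa},\vec{V}\rangle - \varphi_s)\nabla_s^{m+1}\vec{\kappa} + [\langle\vec{\kappa},\nabla_s^m\vec{\kappa}\rangle\nabla_s\vec{V} - \langle\nabla_s\vec{V},\nabla_s^m\vec{\kappa}\rangle\vec{\kappa}]$; substituting $\vec{V}$ and using that $\nabla_s^{j}\vec{V}$ is a combination of $-\nabla_s^{j+2}\vec{\kappa}$, terms of type $P^{j,j}_3(\vec{\kappa})$, and $\lambda(\nabla_s^j\vec{w} + Q^{j-2,j-2}_3(\vec{\kappa},\vec{w}))$, one checks each resulting term lands in the asserted classes. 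The $-\varphi_s\nabla_s^{m+1}\vec{\kappa}$ term is combined with the $\nabla_s$ acting on $\varphi\nabla_s^{m+1}\vec{\kappa}$, which gives $\varphi_s\nabla_s^{m+1}\vec{\kappa} + \varphi\nabla_s^{m+2}\vec{\kappa}$, so the $\varphi_s$ contributions cancel and one is left with $\varphi\nabla_s^{m+2}\vec{\kappa}$, matching the formula at level $m+1$. Applying $\nabla_s$ to the remaining right-hand side terms, I would use that for odd $b$, $\nabla_s P^{a,c}_b(\vec{\kappa}) = P^{a+1,c+1}_b(\vec{\kappa})$ and similarly $\nabla_s Q^{a,c}_3(\vec{\kappa},\vec{w}) = Q^{a+1,c+1}_3(\vec{\kappa},\vec{w})$; since $3$ and $5$ are odd this shifts $P^{m+2,m+2}_3\mapsto P^{m+3,m+3}_3$, $P^{m,m}_5\mapsto P^{m+1,m+1}_5$, $\nabla_s^{m+2}\vec{w}\mapsto\nabla_s^{m+3}\vec{w}$, and $Q^{m,m}_3\mapsto Q^{m+1,m+1}_3$, all exactly as required.

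The one subtlety to watch is the bookkeeping of the derivative-count indices and the parity of the number of factors in the $P$ and $Q$ terms — this is what makes the clean identities $\nabla_s P^{a,c}_b = P^{a+1,c+1}_b$ valid only for odd $b$. Whenever a curvature factor hits $\tau$ (for instance when converting $\partial_s$ to $\nabla_s$ via Lemma~\ref{Qlemma}, if needed, or when commutators produce $\vec{\kappa}$-prefactored scalars), one must verify the term is still of the right type, keeping in mind that $\tau$ is not normal but appears only through inner products that reduce back to $\vec{\kappa}$-factors via $\partial_s\tau = \vec{\kappa}$ and $\langle\cdot,\tau\rangle$ pairings. I expect the main obstacle to be precisely this careful verification that every term produced by the commutator \eqref{f} and by the expansion of $\nabla_s^{m+2}\vec{V}$, $\langle\vec{\kappa},\vec{V}\rangle$, $\langle\nabla_s\vec{V},\nabla_s^m\vec{\kappa}\rangle$ etc.\ falls within $P^{m+3,m+3}_3 + \lambda(\nabla_s^{m+3}\vec{w} + Q^{m+1,m+1}_3) + P^{m+1,m+1}_5 + \varphi\nabla_s^{m+2}\vec{\kappa}$ — a routine but genuinely bookkeeping-heavy check, for which one may refer to the analogous computation in \cite[Lemma~2.3]{DKS} or \cite[Sec.~3]{DP} for the $\lambda=0$ and purely geometric cases, the only new feature here being the $\lambda\vec{w}$ and $\varphi\tau$ terms, both of which have already been tracked explicitly above.
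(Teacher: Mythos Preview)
Your proposal is correct and follows essentially the same approach as the paper: the base case from \eqref{e} by direct substitution of $\vec{V}$, and the induction step via the commutator \eqref{f} together with the shift rules $\nabla_s P^{a,c}_b = P^{a+1,c+1}_b$, $\nabla_s Q^{a,c}_3 = Q^{a+1,c+1}_3$ for odd $b$; the paper writes out the case $m=1$ explicitly and then appeals to induction, which is exactly the scheme you describe. One small slip: you call $Q^{0,0}_3(\vec{\kappa},\vec{w})$ ``empty'' but then (correctly) identify $\lambda\langle\vec{\kappa},\vec{w}\rangle\vec{\kappa}$ as a term of that type --- it is not empty, it is precisely $\vec{w}*\vec{\kappa}*\vec{\kappa}$ with all derivative indices zero.
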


\begin{proof}
For $m=0$ the claim follows directly from \eqref{e}. 
For $m=1$  it follows using \eqref{f} and  \eqref{e}, namely 
\allowdisplaybreaks{\begin{align*}
\nabla_t \nabla_s \vec{\kappa}+ \nabla_s^5 \vec{\kappa} & = \nabla_s \nabla_t  \vec{\kappa} + (\langle \vec{\kappa},\vec{V} \rangle -\partial_{s }\varphi) \nabla_{s}\vec{\kappa} +[\langle \vec{\kappa}, \vec{\kappa}\rangle \nabla_{s}\vec{V} -\langle \nabla_{s} \vec{V}, \vec{\kappa} \rangle \vec{\kappa}] + \nabla_s^5 \vec{\kappa}\\
&= \nabla_{s} (-\nabla_s^4 \vec{\kappa}  + P^{2,2}_3 (\vec{\kappa})+ \lambda (\nabla_s^{2} \vec{w} + Q^{0,0}_3 (\vec{\kappa}, \vec{w})) +P^{0,0}_5 (\vec{\kappa}) + \varphi \nabla_s \vec{\kappa})\\
& \quad 
+ (\langle \vec{\kappa},\vec{V} \rangle -\partial_{s }\varphi) \nabla_{s}\vec{\kappa} +[\langle \vec{\kappa}, \vec{\kappa}\rangle \nabla_{s}\vec{V} -\langle \nabla_{s} \vec{V}, \vec{\kappa} \rangle \vec{\kappa}] + \nabla_s^5 \vec{\kappa}.
\end{align*}
Since  $\nabla_{s} (\varphi \nabla_s \vec{\kappa}) = \varphi_{s} \nabla_s \vec{\kappa} + \varphi \nabla_s^{2} \vec{\kappa}$ and 
\allowdisplaybreaks{\begin{align*}
\langle \vec{\kappa}, \vec{V}\rangle  \nabla_{s} \vec{\kappa} & =  P_3^{3,2}(\vec{\kappa})+  P_5^{1,1}(\vec{\kappa})+  \lambda Q_3^{1,1}(\vec{\kappa}, \vec{w}) \, \\
\mbox{and }\qquad |\vec{\kappa}|^2 \nabla_s \vec{V} & = P_3^{3,3}(\vec{\kappa})+  P_5^{1,1}(\vec{\kappa})+  \lambda Q_3^{1,1}(\vec{\kappa}, \vec{w}),
\end{align*}}
we infer
\begin{align*}
\nabla_t \nabla_s \vec{\kappa}+ \nabla_s^5 \vec{\kappa} &= P^{3,3}_3 (\vec{\kappa})+ \lambda (\nabla_s^{3} \vec{w} + Q^{1,1}_3 (\vec{\kappa}, \vec{w})) +P^{1,1}_5 (\vec{\kappa}) + \varphi \nabla_s^{2} \vec{\kappa} \,,
\end{align*}
noticing that  the terms appearing in  $P^{3,2}_3 (\vec{\kappa})$
can be collected in $P^{3,3}_3 (\vec{\kappa})$. }
The general statement follows with an induction argument.
\end{proof}

\subsection{Interpolation inequalities and embeddings}

We start by recalling some fundamental interpolation inequalities.
Consider the scale invariant norms for $k \in \mathbb{N}_{0}$ and $p \in [1, \infty)$
\begin{equation*}
\| \vec{\kappa}\|_{k,p} := \sum_{i=0}^{k} \| \nabla_s^{i} \vec{\kappa}\|_{p} \quad \mbox{ with } \quad \| \nabla_s^{i} \vec{\kappa} \|_{p} := \mathcal{L}(f)^{i+1-1/p} \Big( \int_I |\nabla_s^{i} \vec{\kappa}|^p \, ds \Big)^{1/p} \, ,
\end{equation*}
(cf. \cite{DKS}) and the usual $L^p$- norm $\| \nabla_s^{i} \vec{\kappa} \|_{L^p}^p :=  \int_I |\nabla_s^{i} \vec{\kappa}|^p \, ds $.  

Most of the following results can be found in several papers (e.g. \cite{DKS,Lin,DP}). We provide reference to the papers where  complete proofs can be found.

\begin{lemma}[Lemma~4.1 \cite{DP}]\label{leminter}
Let $f: I \rightarrow \mathbb{R}^n$ be a smooth regular curve. Then for all $k \in \mathbb{N}$, $p \geq 2$ and $0 \leq i<k$ we have
\begin{equation*}
\| \nabla_s^{i} \vec{\kappa}\|_{p} \leq C \|\vec{\kappa}\|_{2}^{1-\alpha} \|\vec{\kappa}\|_{k,2}^{\alpha} \, ,  
\end{equation*}
with $\alpha= (i+\frac12 -\frac{1}{p})/k$ and $C=C(n,k,p)$.
\end{lemma}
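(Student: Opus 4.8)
The plan is to reduce everything to the model case of a closed curve of unit length and then combine three ingredients: the built-in scale invariance of the norms $\|\nabla_{s}^{j}\vec{\kappa}\|_{p}$, the logarithmic convexity of $j\mapsto\|\nabla_{s}^{j}\vec{\kappa}\|_{L^{2}}$, and the one-dimensional Gagliardo--Nirenberg inequality on $S^{1}$ applied to the scalar function $|\nabla_{s}^{i}\vec{\kappa}|$. First I would normalise: since $\|\nabla_{s}^{j}\vec{\kappa}\|_{p}$ carries exactly the factor $\mathcal{L}(f)^{j+1-1/p}$, it — and hence both sides of the asserted inequality — is invariant under $f\mapsto rf$, so it suffices to treat the case $\mathcal{L}(f)=1$, in which $\|\nabla_{s}^{j}\vec{\kappa}\|_{p}=\|\nabla_{s}^{j}\vec{\kappa}\|_{L^{p}}$; I may also assume $f$ is parametrised by arc length.

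For the $L^{2}$ step I would integrate by parts. On normal fields $A,B$ one has $\partial_{s}\langle A,B\rangle=\langle\nabla_{s}A,B\rangle+\langle A,\nabla_{s}B\rangle$ (the tangential parts of $\partial_{s}A$, $\partial_{s}B$ do not contribute, since $A,B\perp\tau$), so on the closed curve $\int_{I}\langle\nabla_{s}A,B\rangle\,ds=-\int_{I}\langle A,\nabla_{s}B\rangle\,ds$. With $A=\nabla_{s}^{j-1}\vec{\kappa}$, $B=\nabla_{s}^{j}\vec{\kappa}$ and Cauchy--Schwarz this yields, for $j\ge1$,
\[
\|\nabla_{s}^{j}\vec{\kappa}\|_{L^{2}}^{2}\le\|\nabla_{s}^{j-1}\vec{\kappa}\|_{L^{2}}\,\|\nabla_{s}^{j+1}\vec{\kappa}\|_{L^{2}},
\]
i.e. $j\mapsto\log\|\nabla_{s}^{j}\vec{\kappa}\|_{L^{2}}$ is convex; iterating gives $\|\nabla_{s}^{j}\vec{\kappa}\|_{L^{2}}\le\|\vec{\kappa}\|_{L^{2}}^{1-j/k}\|\nabla_{s}^{k}\vec{\kappa}\|_{L^{2}}^{j/k}$ for $0\le j\le k$, the degenerate cases (some norm vanishing) being immediate.

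For the passage to $L^{p}$ I would combine the Kato-type inequality $\bigl|\,\partial_{s}|\nabla_{s}^{i}\vec{\kappa}|\,\bigr|\le|\nabla_{s}^{i+1}\vec{\kappa}|$ (again because $\nabla_{s}^{i}\vec{\kappa}\perp\tau$) with the scalar Gagliardo--Nirenberg inequality on the unit circle, $\|u\|_{L^{p}}\le C\bigl(\|u'\|_{L^{2}}^{\theta}\|u\|_{L^{2}}^{1-\theta}+\|u\|_{L^{2}}\bigr)$ with $\theta=\tfrac12-\tfrac1p\in[0,\tfrac12)$, applied to $u=|\nabla_{s}^{i}\vec{\kappa}|$:
\[
\|\nabla_{s}^{i}\vec{\kappa}\|_{L^{p}}\le C\bigl(\|\nabla_{s}^{i}\vec{\kappa}\|_{L^{2}}^{1-\theta}\|\nabla_{s}^{i+1}\vec{\kappa}\|_{L^{2}}^{\theta}+\|\nabla_{s}^{i}\vec{\kappa}\|_{L^{2}}\bigr).
\]
Now I would insert the $L^{2}$-interpolation bounds for $\|\nabla_{s}^{i}\vec{\kappa}\|_{L^{2}}$ and $\|\nabla_{s}^{i+1}\vec{\kappa}\|_{L^{2}}$ — both legitimate, since $i<k$ forces $i+1\le k$. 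The first term then becomes at most $\|\vec{\kappa}\|_{L^{2}}^{1-\alpha}\|\nabla_{s}^{k}\vec{\kappa}\|_{L^{2}}^{\alpha}$ with $\alpha=\tfrac{(1-\theta)i+\theta(i+1)}{k}=\tfrac{i+1/2-1/p}{k}$, exactly the claimed exponent; the additive term $\|\nabla_{s}^{i}\vec{\kappa}\|_{L^{2}}$ interpolates with exponent $i/k\le\alpha$ and is absorbed into $C\|\vec{\kappa}\|_{2}^{1-\alpha}\|\vec{\kappa}\|_{k,2}^{\alpha}$ using $\|\nabla_{s}^{k}\vec{\kappa}\|_{L^{2}}\le\|\vec{\kappa}\|_{k,2}$, $\|\vec{\kappa}\|_{2}\le\|\vec{\kappa}\|_{k,2}$ and the monotonicity of $t\mapsto a^{1-t}b^{t}$ in $t$ when $a\le b$. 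Undoing the normalisation gives the statement for arbitrary length.

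The step that needs the most care is not any single estimate but keeping the connection bookkeeping honest: checking that the integration by parts and the Kato inequality genuinely hold for the \emph{normal} derivatives $\nabla_{s}$ rather than the full derivatives $\partial_{s}$, and that the hypothesis $i<k$ is precisely what guarantees $\nabla_{s}^{i+1}\vec{\kappa}$ still sits at derivative order $\le k$ when the $L^{2}$-interpolation is applied to it. The remainder is the classical Gagliardo--Nirenberg machinery, and a fully detailed argument of this type is carried out in \cite{DKS} and \cite{DP}.
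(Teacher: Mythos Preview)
The paper does not give its own proof of this lemma: it is stated with a citation to \cite{DP} (Lemma~4.1) and the surrounding text explicitly says that complete proofs are to be found in the references \cite{DKS,Lin,DP}. Your proposal supplies exactly the standard argument used in those references --- normalise to unit length via scale invariance, use integration by parts on normal fields to get the log-convexity $\|\nabla_{s}^{j}\vec{\kappa}\|_{L^{2}}^{2}\le\|\nabla_{s}^{j-1}\vec{\kappa}\|_{L^{2}}\|\nabla_{s}^{j+1}\vec{\kappa}\|_{L^{2}}$, pass to $L^{p}$ via the Kato inequality $|\partial_{s}|\nabla_{s}^{i}\vec{\kappa}||\le|\nabla_{s}^{i+1}\vec{\kappa}|$ and the scalar Gagliardo--Nirenberg inequality on $S^{1}$, then absorb the additive lower-order term using $\|\vec{\kappa}\|_{2}\le\|\vec{\kappa}\|_{k,2}$. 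The bookkeeping is correct, including the observation that $i<k$ ensures $i+1\le k$ so the $L^{2}$-interpolation applies to $\nabla_{s}^{i+1}\vec{\kappa}$; this is precisely the proof one finds in \cite{DKS} (Lemma~2.4 and its proof) and \cite{DP}.
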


\begin{cor}[Corollary~4.2 \cite{DP}]\label{corinter}
Let $f: I \rightarrow \mathbb{R}^n$ be a smooth regular curve. Then for all $k \in \mathbb{N}$ we have
\begin{equation*}
\|\vec{\kappa}\|_{k,2} \leq C ( \|\nabla^{k}_{s}\vec{\kappa}\|_{2} + \|\vec{\kappa}\|_{2}) \, ,
\end{equation*}
with $C=C(n,k)$.
\end{cor}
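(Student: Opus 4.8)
The plan is to derive the inequality from the interpolation estimate of Lemma~\ref{leminter} by an absorption argument. The case $k=1$ is immediate since $\|\vec{\kappa}\|_{1,2}=\|\vec{\kappa}\|_{2}+\|\nabla_{s}\vec{\kappa}\|_{2}$, so I may assume $k\geq 2$. Set $A:=\|\nabla_{s}^{k}\vec{\kappa}\|_{2}$, $B:=\|\vec{\kappa}\|_{2}$ and $S:=\sum_{i=1}^{k-1}\|\nabla_{s}^{i}\vec{\kappa}\|_{2}$, so that $\|\vec{\kappa}\|_{k,2}=A+B+S\leq 2(A+B)+S$; it therefore suffices to bound $S$ by $C(n,k)\,(A+B)$.

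For each $i$ with $1\leq i\leq k-1$, Lemma~\ref{leminter} applied with $p=2$ gives the exponent $\alpha=i/k\in(0,1)$ and
\[
\|\nabla_{s}^{i}\vec{\kappa}\|_{2}\leq C\,\|\vec{\kappa}\|_{2}^{\,1-\alpha}\,\|\vec{\kappa}\|_{k,2}^{\,\alpha}=C\,B^{\,1-\alpha}\,(A+B+S)^{\alpha},
\]
with $C=C(n,k)$. Using Young's inequality in the form $x^{1-\alpha}y^{\alpha}\leq C_{\varepsilon}\,x+\varepsilon\,y$ (valid for $x,y\geq 0$, $\alpha\in(0,1)$ and any $\varepsilon>0$, with $C_{\varepsilon}$ depending only on $\alpha$ and $\varepsilon$) with $x=B$ and $y=A+B+S$, this becomes
\[
\|\nabla_{s}^{i}\vec{\kappa}\|_{2}\leq C\,C_{\varepsilon}\,B+C\,\varepsilon\,(A+B+S).
\]

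Summing over $i=1,\dots,k-1$ yields $S\leq (k-1)\,C\,C_{\varepsilon}\,B+(k-1)\,C\,\varepsilon\,(A+B+S)$. Now I fix $\varepsilon>0$ so small that $(k-1)\,C\,\varepsilon\leq\tfrac12$, which is legitimate because $C$ and $k$ depend only on $n$ and $k$; then $S\leq\tfrac12 S+(k-1)\,C\,C_{\varepsilon}\,B+\tfrac12(A+B)$, hence $S\leq 2(k-1)\,C\,C_{\varepsilon}\,B+(A+B)=:\tilde C\,(A+B)$ with $\tilde C=\tilde C(n,k)$. Combining with $\|\vec{\kappa}\|_{k,2}\leq 2(A+B)+S$ gives the claim with $C=2+\tilde C$.

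The argument is essentially routine: the only points needing care are that the exponent $\alpha=i/k$ produced by Lemma~\ref{leminter} is strictly less than $1$ for $i<k$ (this is why choosing $p=2$ is convenient), so that $\|\vec{\kappa}\|_{k,2}$ appears on the right with a power below $1$ and can be absorbed into the left-hand side, and that $\varepsilon$ is chosen only after the interpolation constant $C(n,k)$ and the number $k-1$ of summands have been fixed, so there is no circularity. An alternative would be to induct on $k$, estimating $\|\nabla_{s}^{k-1}\vec{\kappa}\|_{2}$ by interpolation between $\|\vec{\kappa}\|_{2}$ and $\|\nabla_{s}^{k}\vec{\kappa}\|_{2}$ and feeding the result into the lower-order terms, but the absorption version above is shorter.
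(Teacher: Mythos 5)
Your proof is correct and is the standard interpolation-plus-absorption argument that the cited reference \cite{DP} (Corollary~4.2) uses; the paper itself gives no proof here but delegates entirely to that reference, so you are reproducing the intended argument. The only cosmetic point is that the Young constant $C_{\varepsilon}$ depends on $\alpha=i/k$ and hence on $i$, so strictly one should take $C_{\varepsilon}:=\max_{1\le i\le k-1}C_{\varepsilon,i}$; this is harmless since $i$ ranges over a finite set, and the rest of the bookkeeping (choosing $\varepsilon$ after $C(n,k)$ and $k-1$ are fixed, so the absorption is non-circular) is handled correctly.
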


\begin{lemma}[Lemma~3.4 \cite{DLP2}]\label{lemineqsh} 
Let $f: I \rightarrow \mathbb{R}^n$ be a smooth regular curve. For any $a,c,\ell\in \mathbb{N}_0$, $b \in \mathbb{N}$, $b\geq 2$, $c \leq \ell+2$ and $a < 2(\ell+2)$ we find
\begin{equation}\label{interineq1sh}
\int_I |P^{a,c}_{b} (\vec{\kappa})| \, ds \leq C \mathcal{L}(f)^{1-a-b} \|\vec{\kappa}\|_{2}^{b-\gamma} \| \vec{\kappa}\|_{\ell+2,2}^{\gamma} \, ,
\end{equation}
with $\gamma=(a+\frac12 b-1)/(\ell+2)$ and $C=C(n,\ell,a, b)$. Further if  $a+\frac12 b<2\ell+5$, then for any $\varepsilon>0$
\begin{align}\label{interineq1sh2}
 \int_I |P^{a,c}_{b}(\vec{\kappa})| \, ds & \leq \varepsilon \int_I |\nabla_s^{\ell+2} \vec{\kappa}|^2 \, ds +C \varepsilon^{-\frac{\gamma}{2-\gamma}}  (\|\vec{\kappa}\|^2_{L^{2}})^{\frac{b-\gamma}{2-\gamma}}
  + C  \mathcal{L}(f)^{1-a-\frac{b}{2}}  \|\vec{\kappa}\|_{L^{2}}^{b}  \, ,
\end{align}
with $C=C(n,\ell,a, b)$.
\end{lemma}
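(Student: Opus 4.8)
The plan is to reduce both estimates to the single‑factor Gagliardo--Nirenberg inequality of Lemma~\ref{leminter} via Hölder's inequality, with one case distinction according to whether the maximal admissible order $\ell+2$ actually occurs among the factors. By linearity it suffices to bound $\int_I \prod_{j=1}^{b} |\nabla_s^{i_j}\vec{\kappa}|\,ds$ for a fixed multi-index with $i_1+\dots+i_b=a$ and $\max_j i_j \le c \le \ell+2$. Since $a<2(\ell+2)$ and $b\ge 2$, at most one of the $i_j$ can equal $\ell+2$, and this splits the argument into two cases.

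In the base case, where all $i_j\le \ell+1$, I would apply Hölder's inequality with every exponent equal to $b$ (admissible since $\sum_{j=1}^b\tfrac{1}{b}=1$ and $b\ge2$), giving $\int_I\prod_j|\nabla_s^{i_j}\vec{\kappa}|\,ds\le\prod_j\|\nabla_s^{i_j}\vec{\kappa}\|_{L^b}$, then rewrite each factor through the scale-invariant norm and invoke Lemma~\ref{leminter} with $k=\ell+2$ and $p=b$ (legitimate because $i_j<\ell+2$), bounding the $j$-th factor by $C\|\vec{\kappa}\|_2^{1-\alpha_j}\|\vec{\kappa}\|_{\ell+2,2}^{\alpha_j}$ with $\alpha_j=(i_j+\tfrac{1}{2}-\tfrac{1}{b})/(\ell+2)$. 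Multiplying, the exponents add up to $\sum_j\alpha_j=(a+\tfrac{b}{2}-1)/(\ell+2)=\gamma$ and $\sum_j(1-\alpha_j)=b-\gamma$, while the accumulated powers of $\mathcal{L}(f)$ from the norm conversions combine to $\mathcal{L}(f)^{1-a-b}$, which is \eqref{interineq1sh}. In the other case, say $i_1=\ell+2$; then $i_2+\dots+i_b=a-(\ell+2)\le\ell+1$, so all the remaining orders are $\le\ell+1$, and I would peel off the top factor by Cauchy--Schwarz,
\[
 \int_I |\nabla_s^{\ell+2}\vec{\kappa}|\,\prod_{j=2}^{b}|\nabla_s^{i_j}\vec{\kappa}|\,ds \;\le\; \|\nabla_s^{\ell+2}\vec{\kappa}\|_{L^2}\,\Big(\int_I\prod_{j=2}^{b}|\nabla_s^{i_j}\vec{\kappa}|^2\,ds\Big)^{1/2},
\]
bound $\|\nabla_s^{\ell+2}\vec{\kappa}\|_{L^2}\le C\,\mathcal{L}(f)^{-(\ell+5/2)}\|\vec{\kappa}\|_{\ell+2,2}$ by the definition of the scale-invariant norm, and apply the base case to the remaining $P$-type integral, whose individual orders are now all $\le\ell+1$. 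After taking the square root and collecting the powers of $\mathcal{L}(f)$ and of the two norms (using the identity $(\ell+2)\gamma=a+\tfrac{b}{2}-1$), the exponents come out again as $b-\gamma$ and $\gamma$ and the length power as $1-a-b$, so \eqref{interineq1sh} holds in general.

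For \eqref{interineq1sh2} I would feed Corollary~\ref{corinter}, $\|\vec{\kappa}\|_{\ell+2,2}\le C(\|\nabla_s^{\ell+2}\vec{\kappa}\|_2+\|\vec{\kappa}\|_2)$, into \eqref{interineq1sh}, use $(x+y)^\gamma\le C(x^\gamma+y^\gamma)$, and rewrite every scale-invariant norm through its $L^2$-norm. The pure $\|\vec{\kappa}\|_2$-part collapses to $C\,\mathcal{L}(f)^{1-a-\frac{b}{2}}\|\vec{\kappa}\|_{L^2}^b$, the last term of \eqref{interineq1sh2}; in the part carrying $\|\nabla_s^{\ell+2}\vec{\kappa}\|_2^\gamma$ the powers of $\mathcal{L}(f)$ cancel identically (again by $(\ell+2)\gamma=a+\tfrac{b}{2}-1$), leaving $C\|\vec{\kappa}\|_{L^2}^{b-\gamma}\|\nabla_s^{\ell+2}\vec{\kappa}\|_{L^2}^\gamma$. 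Since the hypothesis $a+\tfrac{b}{2}<2\ell+5$ is exactly $\gamma<2$, Young's inequality with conjugate exponents $\tfrac{2}{\gamma}$ and $\tfrac{2}{2-\gamma}$ absorbs this into $\varepsilon\int_I|\nabla_s^{\ell+2}\vec{\kappa}|^2\,ds+C\varepsilon^{-\frac{\gamma}{2-\gamma}}(\|\vec{\kappa}\|_{L^2}^2)^{\frac{b-\gamma}{2-\gamma}}$, which is \eqref{interineq1sh2}.

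The one genuinely delicate point is the case $c=\ell+2$: Lemma~\ref{leminter} only controls orders strictly below $\ell+2$, so the top factor must be treated separately, and I expect the main effort to lie in checking that the Cauchy--Schwarz split together with the repeated conversions between $L^2$- and scale-invariant norms reproduces precisely the claimed exponents $b-\gamma$, $\gamma$ and the power $1-a-b$ of $\mathcal{L}(f)$. This is purely elementary but error-prone bookkeeping; everything else is a direct chaining of Lemma~\ref{leminter}, Corollary~\ref{corinter}, Hölder's and Young's inequalities.
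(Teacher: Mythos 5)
The paper does not reproduce a proof for this lemma (it is quoted directly from \cite{DLP2}, Lemma~3.4), so there is no internal argument to compare against; your blind proof is a complete stand-alone derivation. Your argument is correct and is essentially the standard one used in the cited literature: split off the (at most one) factor of order $\ell+2$ by Cauchy--Schwarz, apply H\"older with equal exponents and Lemma~\ref{leminter} to the remaining sub-top-order factors, track the scale-invariant normalisations, and then obtain~\eqref{interineq1sh2} by feeding Corollary~\ref{corinter} into~\eqref{interineq1sh} and applying Young's inequality with exponents $2/\gamma$ and $2/(2-\gamma)$ (the hypothesis $a+\tfrac{b}{2}<2\ell+5$ being exactly $\gamma<2$). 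The exponent bookkeeping you describe does close: after the Cauchy--Schwarz peel-off the inner integral is $\int_I |P^{2(a-\ell-2),\,c'}_{2(b-1)}(\vec{\kappa})|\,ds$ with $c'\le \ell+1$ and $2(a-\ell-2)<2(\ell+2)$, so the base case applies with $b'=2(b-1)\ge 2$, and the resulting powers of $\mathcal{L}(f)$, $\|\vec{\kappa}\|_2$, $\|\vec{\kappa}\|_{\ell+2,2}$ recombine to $1-a-b$, $b-\gamma$, $\gamma$ exactly as claimed.
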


 We finish this section with some important results that are based on classical embedding theory.
\begin{lemma}[Lemma 2.7 \cite{DKS}]\label{lememb}
Assume that the bounds $\|\vec{\kappa}\|_{L^{2}} \leq \Lambda_{0}$ and $\|\nabla_{s}^{m} \vec{\kappa}\|_{L^{1}} \leq \Lambda_{m}$ for $m \geq 1$. Then for any $m \geq 1$ one has
\begin{align}
\| \partial_{s}^{m-1} \vec{\kappa} \|_{L^{\infty}} + \| \partial_{s}^{m} \vec{\kappa}\|_{L^{1}} \leq c_{m}(\Lambda_{0}, \ldots, \Lambda_{m}).
\end{align}
\end{lemma}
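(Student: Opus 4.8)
The plan is to argue by induction on $m$, the single analytic ingredient being the one-dimensional Sobolev embedding $W^{1,1}\hookrightarrow L^{\infty}$. For any continuous periodic normal field $\vec{\phi}$ along the closed curve $f$ one picks, by the intermediate value theorem, a point where $|\vec{\phi}|$ equals its mean $\tfrac{1}{\mathcal{L}(f)}\int_{I}|\vec{\phi}|\,ds$, integrates $\partial_{s}|\vec{\phi}|$ from there, and uses Cauchy--Schwarz to obtain
\begin{align*}
\|\vec{\phi}\|_{L^{\infty}}\leq\|\partial_{s}\vec{\phi}\|_{L^{1}}+\frac{1}{\mathcal{L}(f)}\int_{I}|\vec{\phi}|\,ds\leq\|\partial_{s}\vec{\phi}\|_{L^{1}}+\mathcal{L}(f)^{-1/2}\|\vec{\phi}\|_{L^{2}}.
\end{align*}
Since the curve is closed, the Poincar\'e inequality $2\pi\leq\sqrt{\mathcal{L}(f)}\,\|\vec{\kappa}\|_{L^{2}}$ recalled above gives $\mathcal{L}(f)^{-1/2}\leq\Lambda_{0}/(2\pi)$, so the length never enters the resulting constants.

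For the base case $m=1$, Lemma~\ref{Qlemma} gives $\partial_{s}\vec{\kappa}=\nabla_{s}\vec{\kappa}-|\vec{\kappa}|^{2}\tau$, hence $\|\partial_{s}\vec{\kappa}\|_{L^{1}}\leq\Lambda_{1}+\|\vec{\kappa}\|_{L^{2}}^{2}\leq\Lambda_{1}+\Lambda_{0}^{2}$, and the embedding inequality with $\vec{\phi}=\vec{\kappa}$ yields $\|\vec{\kappa}\|_{L^{\infty}}\leq C_{0}(\Lambda_{0},\Lambda_{1})$. For the inductive step (now $m\geq 2$) I would assume that $\|\nabla_{s}^{j}\vec{\kappa}\|_{L^{\infty}}\leq C_{j}(\Lambda_{0},\dots,\Lambda_{j+1})$ for all $0\leq j\leq m-2$ and deduce the bound for $j=m-1$. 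The first ingredient is an $L^{2}$-bound for $\nabla_{s}^{m-1}\vec{\kappa}$: integrating the identity $\partial_{s}\langle\nabla_{s}^{m-2}\vec{\kappa},\nabla_{s}^{m-1}\vec{\kappa}\rangle=|\nabla_{s}^{m-1}\vec{\kappa}|^{2}+\langle\nabla_{s}^{m-2}\vec{\kappa},\nabla_{s}^{m}\vec{\kappa}\rangle$ over the closed curve gives $\|\nabla_{s}^{m-1}\vec{\kappa}\|_{L^{2}}^{2}\leq\|\nabla_{s}^{m-2}\vec{\kappa}\|_{L^{\infty}}\|\nabla_{s}^{m}\vec{\kappa}\|_{L^{1}}\leq C_{m-2}\Lambda_{m}$. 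The second ingredient is $\partial_{s}\nabla_{s}^{m-1}\vec{\kappa}=\nabla_{s}^{m}\vec{\kappa}-\langle\nabla_{s}^{m-1}\vec{\kappa},\vec{\kappa}\rangle\tau$, which gives $\|\partial_{s}\nabla_{s}^{m-1}\vec{\kappa}\|_{L^{1}}\leq\Lambda_{m}+\|\nabla_{s}^{m-1}\vec{\kappa}\|_{L^{2}}\|\vec{\kappa}\|_{L^{2}}$. Feeding both into the embedding inequality applied to $\vec{\phi}=\nabla_{s}^{m-1}\vec{\kappa}$ produces $\|\nabla_{s}^{m-1}\vec{\kappa}\|_{L^{\infty}}\leq C_{m-1}(\Lambda_{0},\dots,\Lambda_{m})$, which closes the induction.

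It then remains to pass from these normal-derivative estimates to the claimed bounds on $\partial_{s}^{m-1}\vec{\kappa}$ and $\partial_{s}^{m}\vec{\kappa}$. By Lemma~\ref{Qlemma}, $\partial_{s}^{m}\vec{\kappa}-\nabla_{s}^{m}\vec{\kappa}$ is (up to the factor $\tau$) a linear combination of products $P^{a,c}_{b}(\vec{\kappa})$ with $b\geq 2$ whose factors are normal derivatives of $\vec{\kappa}$ of order at most $m-1$, and the corresponding statement for $\partial_{s}^{m-1}\vec{\kappa}-\nabla_{s}^{m-1}\vec{\kappa}$ involves only orders at most $m-2$. For the $L^{1}$-estimate one places the highest-order factor (order $\leq m-1$) in $L^{1}$, using $\|\nabla_{s}^{i}\vec{\kappa}\|_{L^{1}}\leq\Lambda_{i}$, and the rest in $L^{\infty}$, using the induction together with the $L^{\infty}$-bound on $\nabla_{s}^{m-1}\vec{\kappa}$ just obtained; a product consisting only of $\vec{\kappa}$'s is estimated by pairing two of its factors into $|\vec{\kappa}|^{2}$ and invoking $\|\vec{\kappa}\|_{L^{2}}\leq\Lambda_{0}$. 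For the $L^{\infty}$-estimate all factors already have order at most $m-2$ and are controlled by the induction. This yields $\|\partial_{s}^{m-1}\vec{\kappa}\|_{L^{\infty}}+\|\partial_{s}^{m}\vec{\kappa}\|_{L^{1}}\leq c_{m}(\Lambda_{0},\dots,\Lambda_{m})$, as claimed. I expect the only mildly delicate point to be the bookkeeping in this last step: verifying that the derivative counts appearing in the $P^{a,c}_{b}(\vec{\kappa})$-terms always lie in the range for which the required bounds are available, and in particular that no lone factor $\vec{\kappa}$---which would need an upper length bound to estimate in $L^{1}$---ever occurs. No genuinely new estimate is needed beyond the embedding inequality above and the algebraic identities of Lemma~\ref{Qlemma}.
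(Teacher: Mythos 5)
Your argument is correct, and since the paper simply cites this statement as \cite[Lemma~2.7]{DKS} without reproducing a proof, the relevant comparison is with the source: your route---a one-dimensional Sobolev embedding $W^{1,1}\hookrightarrow L^{\infty}$, the $\partial_s/\nabla_s$ conversion from Lemma~\ref{Qlemma}, an $L^2$-interpolation via one integration by parts, and the Poincar\'e inequality to remove the length from the constants---is in essence the argument of \cite{DKS}. The bookkeeping worry you raise yourself is benign: in the expansion of Lemma~\ref{Qlemma} the tangential remainder always has $b\geq 2$ and the normal remainder always has $b\geq 3$ (one can check this from the recursion $\partial_s\vec{\kappa}=\nabla_s\vec{\kappa}-|\vec{\kappa}|^2\tau$, which only ever increases the number of factors), so an isolated factor $\vec{\kappa}$ never appears and the $\|\vec{\kappa}\|_{L^2}^2$-pairing you describe is always available when $a=0$.
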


\begin{lemma}\label{hilfsatz}
For  any smooth scalar map $h:I \to \R$ and  normal vector field  $\vec{\phi}:I \to \R^{n}$ along $f$ we have that
\begin{align*}
\| \vec{\phi} \|_{\infty} \leq C (\| \vec{\phi} \|_{L^{2}} + \|\nabla_{s} \vec{\phi} \|_{L^{2}})\\
\| h \|_{\infty} \leq C (\| h \|_{L^{2}} + \|\partial_{s} h \|_{L^{2}})\
\end{align*}
where $C=C(\frac{1}{\mathcal{L}(f)})$. 
\end{lemma}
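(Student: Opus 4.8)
The plan is to derive both inequalities from a single one–dimensional Agmon–type embedding for periodic scalar functions, and then specialise. Identify $I$ with a circle of circumference $L := \mathcal{L}(f)$ and parametrise the curve by arc length, so that all functions below are $L$–periodic in $s$. The estimate I would establish first is: for every smooth $L$–periodic $g : I \to \R$,
\[
\|g\|_{\infty}^{2} \leq \Big(1+\tfrac{1}{L}\Big)\big(\|g\|_{L^{2}} + \|\partial_{s} g\|_{L^{2}}\big)^{2}.
\]
Granting this, the second (scalar) inequality in the statement is immediate on taking $g = h$, and the resulting constant $\sqrt{1+1/L}$ depends only on $1/\mathcal{L}(f)$, as asserted.

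To prove this estimate I would choose, by continuity of $g^{2}$, a point $s_{0} \in I$ at which $g^{2}$ attains its mean value, i.e.\ $g(s_{0})^{2} = \tfrac{1}{L}\int_{I} g^{2}\,ds = \tfrac{1}{L}\|g\|_{L^{2}}^{2}$. For an arbitrary $s \in I$, the fundamental theorem of calculus along a shortest arc from $s_{0}$ to $s$ (of length $\leq L$), followed by Cauchy--Schwarz and Young's inequality, gives
\begin{align*}
g(s)^{2} &= g(s_{0})^{2} + \int_{s_{0}}^{s} \partial_{s}(g^{2})\,ds' \leq \tfrac{1}{L}\|g\|_{L^{2}}^{2} + 2\int_{I} |g|\,|\partial_{s} g|\,ds \leq \tfrac{1}{L}\|g\|_{L^{2}}^{2} + 2\|g\|_{L^{2}}\|\partial_{s} g\|_{L^{2}} \\
&\leq \Big(1+\tfrac{1}{L}\Big)\big(\|g\|_{L^{2}} + \|\partial_{s} g\|_{L^{2}}\big)^{2},
\end{align*}
and taking the supremum over $s$ yields the claim.

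For the vector field I would run the very same argument with $g^{2} := |\vec{\phi}|^{2}$, which is a genuinely smooth scalar function (there is no difficulty at zeros of $\vec{\phi}$). The only additional ingredient is the identity $\langle \partial_{s}\vec{\phi}, \vec{\phi}\rangle = \langle \nabla_{s}\vec{\phi}, \vec{\phi}\rangle$, which holds because $\vec{\phi}$ is normal along $f$ and hence orthogonal to the tangential part $\langle \partial_{s}\vec{\phi}, \tau\rangle \tau$ of $\partial_{s}\vec{\phi}$. Consequently $|\partial_{s}(|\vec{\phi}|^{2})| = 2|\langle \nabla_{s}\vec{\phi}, \vec{\phi}\rangle| \leq 2|\nabla_{s}\vec{\phi}|\,|\vec{\phi}|$, and repeating the computation above with this bound in place of $2|g|\,|\partial_{s}g|$ gives $\|\vec{\phi}\|_{\infty}^{2} \leq (1+1/L)(\|\vec{\phi}\|_{L^{2}} + \|\nabla_{s}\vec{\phi}\|_{L^{2}})^{2}$, i.e.\ the first inequality.

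I do not expect any real obstacle here: this is a classical Sobolev/Agmon inequality in one dimension. The two small points that require attention are (i) selecting $s_{0}$ at the mean value of $g^{2}$ (rather than estimating more crudely), so that the constant comes out in the form $C(1/\mathcal{L}(f))$ and, in particular, does not degenerate as $\mathcal{L}(f) \to \infty$; and (ii) using the normal-field identity $\langle \partial_{s}\vec{\phi},\vec{\phi}\rangle = \langle \nabla_{s}\vec{\phi},\vec{\phi}\rangle$ so that only $\nabla_{s}\vec{\phi}$, and not the full derivative $\partial_{s}\vec{\phi}$, enters the right-hand side. Alternatively one could quote a classical embedding statement (in the spirit of Lemma~\ref{lememb}), but the self-contained computation above is more direct.
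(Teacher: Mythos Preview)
Your proof is correct and follows essentially the same approach that the paper indicates: apply the classical one--dimensional embedding argument (fundamental theorem of calculus plus Cauchy--Schwarz) to the smooth scalar $h^{2}$, respectively $|\vec{\phi}|^{2}$, using in the latter case the identity $\langle\partial_{s}\vec{\phi},\vec{\phi}\rangle = \langle\nabla_{s}\vec{\phi},\vec{\phi}\rangle$ for normal $\vec{\phi}$. The paper only cites \cite[Lemma~3.7]{DLP-LTE} for the details, which are precisely the ones you supply.
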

\begin{proof}
The proof of both statements can be found in the proof of \cite[Lemma~3.7]{DLP-LTE}. It is an  application of classical embedding theory to the map $|\vec{\phi}|^{2}$ respectively $h^{2}$.
\end{proof}
More generally we  can state the following.
\begin{lemma}[Lemma 3.7 \cite{DLP-LTE}]\label{lem:trickbdry}
We have that for any $x \in [0,1]$ there holds
\begin{align}\label{trickboundary}
| P^{a,c}_{b}(\vec{\kappa})(x)|^{2} \leq C  \int_{I} \big( |P^{2a+1,c+1}_{2b}(\vec{\kappa})| + |P^{2a,c}_{2b} (\vec{\kappa})|\big) ds, \quad \text{ if $b$ is odd},\\ \label{trickboundary2}
| P^{a,c}_{b}(\vec{\kappa})(x)| \leq C  \int_{I} \big(|P^{a+1,c+1}_{b}(\vec{\kappa})| + |P^{a,c}_{b} (\vec{\kappa})| \big) ds, \quad \text{ if $b$ is even},
\end{align}
where $C=C(\frac{1}{\mathcal{L}(f)})$.
\end{lemma}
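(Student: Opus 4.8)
The plan is to obtain both inequalities from one elementary one-dimensional device. For any smooth scalar function $g$ on the closed curve, writing $g(x)-g(y)=\int_y^x\partial_s g\,ds'$, integrating over $y\in I$ and dividing by $\mathcal{L}(f)$, one gets
\[
|g(x)|\le\frac{1}{\mathcal{L}(f)}\int_I|g|\,ds+\int_I|\partial_s g|\,ds\qquad\text{for every }x\in I.
\]
It then suffices to pick $g$ suitably in each of the two cases and to keep track of how the indices $(a,b,c)$ transform. Throughout I will use the two structural facts already recorded in Section~\ref{sec1}: that $\nabla_s P^{a,c}_b(\vec{\kappa})=P^{a+1,c+1}_b(\vec{\kappa})$ for odd $b$, and --- essentially the same computation --- that whenever $\partial_s$ acts inside an expression all of whose factors are normal vector fields, the tangential correction is contracted against a normal field and hence drops out.

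For $b$ odd, $\vec{\phi}:=P^{a,c}_b(\vec{\kappa})$ is a normal vector field along $f$, and I take $g:=|\vec{\phi}|^2=\langle\vec{\phi},\vec{\phi}\rangle$. Contracting two $P^{a,c}_b$-vectors gives a scalar carrying $2a$ derivatives spread over $2b$ curvature factors, with at most $c$ on any one, so $g$ is of type $|P^{2a,c}_{2b}(\vec{\kappa})|$. Differentiating, $\partial_s g=2\langle\partial_s\vec{\phi},\vec{\phi}\rangle=2\langle\nabla_s\vec{\phi},\vec{\phi}\rangle$ since $\langle\tau,\vec{\phi}\rangle=0$, and $\nabla_s\vec{\phi}=P^{a+1,c+1}_b(\vec{\kappa})$, so $\partial_s g$ is of type $P^{2a+1,c+1}_{2b}(\vec{\kappa})$. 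Feeding this $g$ into the displayed inequality yields \eqref{trickboundary} with $C=C(1/\mathcal{L}(f))$.

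For $b$ even, $g:=P^{a,c}_b(\vec{\kappa})$ is already a scalar function on the closed curve. Applying the product rule to its inner-product factors and replacing each $\partial_s\nabla_s^i\vec{\kappa}$ by $\nabla_s^{i+1}\vec{\kappa}$ (legitimate since it is paired with a normal field) shows $\partial_s g=P^{a+1,c+1}_b(\vec{\kappa})$, and the displayed inequality then gives \eqref{trickboundary2} at once. I would also remark that, for odd $b$, \eqref{trickboundary} can alternatively be read off from the first estimate of Lemma~\ref{hilfsatz} applied to $\vec{\phi}=P^{a,c}_b(\vec{\kappa})$; that route, however, produces a term $\|\nabla_s\vec{\phi}\|_{L^2}^2$ of type $|P^{2a+2,c+1}_{2b}(\vec{\kappa})|$ and so loses one derivative, which is why the direct argument above is preferable.

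I do not expect a genuine obstacle: the substance is just the closed-curve Sobolev inequality together with careful index bookkeeping. The two points that deserve a sentence of care are (i) checking that every tangential term generated by $\partial_s$ vanishes because it meets a normal vector field --- exactly the observation behind $\nabla_s P^{a,c}_b(\vec{\kappa})=P^{a+1,c+1}_b(\vec{\kappa})$ --- and (ii) noting that, since we work with closed curves, the boundary evaluation used in Lemma~3.7 of \cite{DLP-LTE} is here replaced by the averaging identity above, which is what produces the dependence of $C$ on $1/\mathcal{L}(f)$.
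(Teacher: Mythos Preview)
Your argument is correct and is precisely the approach indicated by the paper: the result is quoted from \cite[Lemma~3.7]{DLP-LTE} without proof, but the proof sketch given for the companion Lemma~\ref{hilfsatz} (``an application of classical embedding theory to the map $|\vec{\phi}|^{2}$ respectively $h^{2}$'') is exactly the device you use, namely the $W^{1,1}\hookrightarrow L^\infty$ inequality on the closed curve applied to $g=|P^{a,c}_b(\vec{\kappa})|^2$ (odd $b$) and $g=P^{a,c}_b(\vec{\kappa})$ (even $b$). Your index bookkeeping and the observation that the tangential part of $\partial_s$ drops out against normal factors are both correct, and your remark that going through Lemma~\ref{hilfsatz} itself would cost an extra derivative ($2a+2$ in place of $2a+1$) is a nice clarification.
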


\section{Long-time existence}\label{sec3}
This section is devoted to the proof of Theorem~\ref{mainteo}.
By assumption we know 
that given any smooth regular initial data $f_{0}$, 
there exists a smooth regular solution $f: [0,T) \times I \to \R^{n}$ of \eqref{1.2} with $f(0, \cdot) = f_{0}$.
Assume by contradiction that  the solution does not exist globally in time and  let $0<T<\infty$ be the maximal time. Recall that on $[0,T)$ the uniform bounds   listed in Section~\ref{sec21} hold (with constants that depend on $\lambda$ and the initial energy but not on $T$). In particular 
\begin{align}\label{star}
\| \vec{\kappa} (t)\|_{L^{2}} \leq C, \qquad \frac{1}{C} \leq \mathcal{L}(f(t)) \leq C, \qquad t \in [0, T)
\end{align}
with $C =C(\lambda, \mathcal{D}_{\lambda}(f_{0}))$. In the following a constant $C$ may vary from line to line, but we will indicate  what it depends on.

Our first task is to derive uniform bounds for $\vec{\kappa}$, $\varphi$ and their derivatives. This is performed in several steps, using an induction procedure.

\bigskip
\noindent\textbf{First Step - Part A: bound on $\| \nabla_{s} \vec{\kappa}\|_{L^{2}}$.}
Recalling \eqref{V}, using Lemma~\ref{lempartint} with $\vec{\phi}:=\nabla_{s}\vec{\kappa}$, Lemma~\ref{evolcurvature}, and exploiting the fact that $ \varphi \langle \nabla_{s}^{2} \vec{\kappa}, \nabla_{s} \vec{\kappa} \rangle +  \frac{1}{2} \varphi_{s} |\nabla_{s} \vec{\kappa}|^{2} = \partial_{s} \left(  \frac{1}{2} \varphi |\nabla_{s} \vec{\kappa}|^{2}\right)$
we obtain
\allowdisplaybreaks{
\begin{align*}
\frac{d}{dt} &\left( \frac{1}{2}\int_{I} |\nabla_{s} \vec{\kappa}|^{2} ds\right) +
\int_{I} |\nabla_{s}^{3} \vec{\kappa}|^{2} ds + \frac{1}{2}\int_{I} |\nabla_{s} \vec{\kappa}|^{2} ds \\
&=\int_{I}\langle (\nabla_{t}+ \nabla_{s}^{4}) \nabla_{s} \vec{\kappa},\nabla_{s} \vec{\kappa} \rangle 
+ \frac{1}{2} \varphi_{s} |\nabla_{s} \vec{\kappa}|^{2} ds - \frac{1}{2} \int_{I} |\nabla_{s} \vec{\kappa}|^{2} \langle  \vec{\kappa}, \vec{V} \rangle ds + \frac{1}{2}\int_{I} |\nabla_{s} \vec{\kappa}|^{2} ds\\
&= \int_{I} \langle  P^{3,3}_3 (\vec{\kappa})+ \lambda (\nabla_s^{3} \vec{w} + Q^{1,1}_3 (\vec{\kappa}, \vec{w})) +P^{1,1}_5 (\vec{\kappa}), \nabla_{s} \vec{\kappa} \rangle ds \\
& \qquad- \frac{1}{2} \int_{I} |\nabla_{s} \vec{\kappa}|^{2} \langle  \vec{\kappa}, \vec{V} \rangle ds
+ \frac{1}{2}\int_{I} |\nabla_{s} \vec{\kappa}|^{2} ds
\\
& =\int_{I} P_{4}^{4,3} (\vec{\kappa}) + P^{2,1}_6 (\vec{\kappa})  + P_{2}^{2,1} (\vec{\kappa})
+ P_{4}^{4,2} (\vec{\kappa}) ds +
 \lambda \int_{I} \langle \nabla_s^{3} \vec{w} + Q^{1,1}_3 (\vec{\kappa}, \vec{w}), \nabla_{s} \vec{\kappa} \rangle ds\\
 & =\int_{I} P_{4}^{4,3} (\vec{\kappa}) + P^{2,1}_6 (\vec{\kappa}) + P_{2}^{2,1} (\vec{\kappa})  ds +
 \lambda \int_{I} \langle \nabla_{s} \vec{w}, \nabla_{s}^{3} \vec{\kappa} \rangle ds + \lambda  \int_{I} \langle Q^{1,1}_3 (\vec{\kappa}, \vec{w}), \nabla_{s} \vec{\kappa} \rangle ds\\
& =J_{1}+J_{2}+J_{3},
\end{align*} }
where we have used integration by parts in the last step and absorbed the terms $ P_{4}^{4,2} (\vec{\kappa}) $ into $ P_{4}^{4,3} (\vec{\kappa}) $.
By applying Lemma~\ref{lemineqsh} and \eqref{star} we find
\begin{align*}
|J_{1}|&=|\int_{I} P_{4}^{4,3} (\vec{\kappa}) + P^{2,1}_6 (\vec{\kappa}) + P_{2}^{2,1} (\vec{\kappa}) ds | \leq \epsilon \int_{I} |\nabla_{s}^{3} \vec{\kappa}|^{2} ds + C(\epsilon, n, \lambda, \mathcal{D}_{\lambda}(f_{0}) ) .
\end{align*}
Since
\begin{align}\label{nablaw}
\nabla_{s}(\vec{w})= \nabla_{s} (|f_{x}| \vec{\kappa})= |f_{x}| \nabla_{s} \vec{\kappa} + (|f_{x}|)_{s} \vec{\kappa}= |f_{x}| \nabla_{s} \vec{\kappa} +\frac{\varphi}{\lambda} \vec{\kappa}
\end{align}
we can write
\begin{align*}
J_{2} =\lambda \int_{I} \langle \nabla_{s} \vec{w}, \nabla_{s}^{3} \vec{\kappa} \rangle ds
=\lambda \int_{I} |f_{x}| \langle  \nabla_{s} \vec{\kappa},  \nabla_{s}^{3} \vec{\kappa}\rangle ds +
 \int_{I} \varphi \langle   \vec{\kappa},  \nabla_{s}^{3} \vec{\kappa}\rangle ds=J_{2,1}+J_{2,2}.
\end{align*}
We have
\begin{align*}
J_{2,1} &\leq \epsilon \int_{I} |\nabla_{s}^{3} \vec{\kappa}|^{2} ds + C_{\epsilon} \| |f_{x}|\|_{\infty}^{2} \int_{I} | \nabla_{s} \vec{\kappa}|^{2} ds \\
&\leq 
\epsilon \int_{I} |\nabla_{s}^{3} \vec{\kappa}|^{2} ds   +
C_{\epsilon}(1 +\| \varphi\|_{L^{2}}^{2})\int_{I} | \nabla_{s} \vec{\kappa}|^{2} ds
\end{align*}
where we have used \eqref{stella1} in the last step. Note that here $C_{\epsilon}= C(\epsilon, \lambda, \mathcal{D}_{\lambda}(f_{0}))$. 
Next, we compute
\begin{align*}
J_{2,2} &=\int_{I} \varphi \langle   \vec{\kappa},  \nabla_{s}^{3} \vec{\kappa}\rangle ds \leq
\| \vec{\kappa}\|_{L^{\infty}} \|\varphi\|_{L^{2}} \| \nabla_{s}^{3} \vec{\kappa}\|_{L^{2}}
 \leq \epsilon\int_{I} |\nabla_{s}^{3} \vec{\kappa}|^{2} ds + C_{\epsilon} \| \vec{\kappa}\|^{2}_{L^{\infty}} \|\varphi\|_{L^{2}}^{2}. 
\end{align*}
Since
$
\| \vec{\kappa}\|_{L^{\infty}}^{2} \leq C (\| \vec{\kappa}\|_{L^{2}}^{2} + \| \nabla_{s} \vec{\kappa}\|_{L^{2}}^{2}) $ 
by Lemma~\ref{hilfsatz} and~\eqref{star} we obtain
\begin{align*}
J_{2,2} \leq \epsilon\int_{I} |\nabla_{s}^{3} \vec{\kappa}|^{2} ds + C_{\epsilon} (1 + \| \nabla_{s} \vec{\kappa}\|_{L^{2}}^{2}) \|\varphi\|_{L^{2}}^{2}.
\end{align*}
Using the definition of $Q^{1,1}_3 (\vec{\kappa}, \vec{w})$, $\vec{w}$, \eqref{nablaw}, and \eqref{interineq1sh} we observe that
\begin{align*}
J_{3}& =\lambda  \int_{I} \langle Q^{1,1}_3 (\vec{\kappa}, \vec{w}), \nabla_{s} \vec{\kappa} \rangle ds=\int_{I} \lambda|f_{x}| P_{4}^{2,1}(\vec{\kappa}) + \varphi P_{4}^{1,1}(\vec{\kappa}) ds\\
& \leq |\lambda|\| |f_{x}| \|_{L^{\infty}} \int_{I} | P_{4}^{2,1}(\vec{\kappa})| ds +\| \varphi\|_{L^{2}} \left(\int_{I} |P_{8}^{2,1} (\vec{\kappa})| ds \right)^{\frac{1}{2}}\\
& \leq C |\lambda| \| |f_{x}| \|_{L^{\infty}} \mathcal{L}(f)^{-5} \| \vec{\kappa}\|_{2}^{3} \| \vec{\kappa}\|_{3,2}
+ C\| \varphi\|_{L^{2}} \mathcal{L}(f)^{-9/2} \| \vec{\kappa}\|_{2}^{\frac{8-5/3}{2}} \| \vec{\kappa}\|_{3,2}^{\frac{5}{6}}.
\end{align*}
Using the bounds for the length and curvature \eqref{star}, Corollary~\ref{corinter}, Young inequality, and \eqref{stella1} we obtain
\begin{align*}
J_{3} & \leq C\| |f_{x}| \|_{L^{\infty}} (1 + \| \nabla_{s}^{3} \vec{\kappa}\|_{L^{2}}) + C \| \varphi \|_{L^{2}}^{2} + C\| \vec{\kappa}\|_{3,2}^{\frac{5}{3}} \\
& \leq \epsilon \int_{I} |\nabla_{s}^{3} \vec{\kappa}|^{2} ds +C_{\epsilon} (1+  \|\varphi\|_{L^{2}}^{2}) . 
\end{align*} 
Collecting all estimates found so far for $J_{1}$, $J_{2}$, $J_{3}$,  and choosing $\epsilon$ appropriately we find
\begin{align*}
\frac{d}{dt} &\left( \frac{1}{2}\int_{I} |\nabla_{s} \vec{\kappa}|^{2} ds\right) +
\frac{1}{2}\int_{I} |\nabla_{s}^{3} \vec{\kappa}|^{2} ds + \frac{1}{2}\int_{I} |\nabla_{s} \vec{\kappa}|^{2} ds \\
&\leq C(1 +\| \varphi\|_{L^{2}}^{2}) + C(1 +\| \varphi\|_{L^{2}}^{2})\int_{I} | \nabla_{s} \vec{\kappa}|^{2} ds 
\end{align*}
where  $ C=C(n ,\lambda, \mathcal{D}_{\lambda}(f_{0}))$. 
On the other hand using again Lemma~\ref{lemineqsh} we can write 
$$\int_{I} | \nabla_{s} \vec{\kappa}|^{2} ds  = \int_{I} |P_{2}^{2,1} (\vec{\kappa})| ds \leq \epsilon \int_{I} |\nabla_{s}^{3} \vec{\kappa}|^{2} ds + C(\epsilon, n, \lambda, \mathcal{D}_{\lambda}(f_{0}) ) ,
$$
so that, upon choosing $\epsilon$ small enough, we can finally write 
\begin{align*}
\frac{d}{dt} &\left( \frac{1}{2}\int_{I} |\nabla_{s} \vec{\kappa}|^{2} ds\right) +
\frac{1}{4}\int_{I} |\nabla_{s}^{3} \vec{\kappa}|^{2} ds + \frac{1}{2}\int_{I} |\nabla_{s} \vec{\kappa}|^{2} ds \\
&\leq C(1 +\| \varphi\|_{L^{2}}^{2}) + C\| \varphi\|_{L^{2}}^{2}\int_{I} | \nabla_{s} \vec{\kappa}|^{2} ds 
\end{align*}
where  $ C=C(n ,\lambda, \mathcal{D}_{\lambda}(f_{0}))$.
It follows for $\xi(t) := e^{t}\int_{I} |\nabla_{s} \vec{\kappa}|^{2}(t) ds$ that
\begin{align*}
\xi'(t) \leq C e^{t}(1 +\| \varphi\|_{L^{2}}^{2}) + C \| \varphi\|_{L^{2}}^{2} \xi(t).
\end{align*}
Using that $\int_{0}^{t} e^{t'}(1 +\| \varphi\|_{L^{2}}^{2}) dt' \leq e^{t} + e^{t} \int_{0}^{t}\| \varphi\|_{L^{2}}^{2} dt' \leq C e^{t}$ by \eqref{wichtig} we infer
\begin{align*}
\xi(t) \leq \xi(0) + Ce^{t} + C\int_{0}^{t}\| \varphi \|_{L^{2}}^{2}(t') \xi(t')\,  dt'
\end{align*}
and a Gronwall Lemma gives that $\xi(t) \leq C (\xi(0) + Ce^{t})$, that is
\begin{align}\label{boundnablak}
 \sup_{[0,T)}\| \nabla_{s} \vec{\kappa}\|_{L^{2}}^{2}(t) \leq C_{1} =C_{1}(n, \lambda, \mathcal{D}_{\lambda}(f_{0}), f_{0}).
\end{align}
Note that the above bound  together with \eqref{wichtig}, \eqref{star}, Lemma~\ref{hilfsatz} yields
\begin{align}\label{boundk1}
 \sup_{[0,T)} \| \vec{\kappa}(t) \|_{L^{\infty}}& \leq C_{1} =C_{1}(n, \lambda, \mathcal{D}_{\lambda}(f_{0}), f_{0}),\\
 \label{boundk12}
 \sup_{[0,T)} \int_{0}^{t} \| \nabla_{s}^{3} \vec{\kappa} \|_{L^{2}}^{2} dt'  &\leq C_{1,1} =C_{1,1}(n, \lambda, \mathcal{D}_{\lambda}(f_{0}), f_{0},T).
\end{align}

\bigskip
\noindent \textbf{First Step-Part B: Bound on $\|\varphi\|_{L^{2}}$.}
Although we know already that the $L^{2}$-norm of the tangential component behaves well in time (in the sense of \eqref{wichtig}), we need to refine this information.
To that end we consider
\begin{align*}
&\frac{d}{dt} \left(\frac{1}{2} \int_{I} \varphi^{2} ds \right)=
\int_{I} \varphi \varphi_{t} ds + \frac{1}{2} \int_{I} \varphi^{2} (\varphi_{s} - \langle \vec{\kappa}, \vec{V} \rangle) ds\\
&\quad = \int_{I} \varphi \lambda (|f_{x}| \varphi_{s})_{s} - \lambda \varphi (\langle \vec{\kappa}, \vec{V} \rangle |f_{x}|)_{s} +(\langle \vec{\kappa}, \vec{V} \rangle - \varphi_{s})  \varphi^{2} ds + \frac{1}{2} \int_{I} \varphi^{2} (\varphi_{s} - \langle \vec{\kappa}, \vec{V} \rangle) ds
\end{align*}
where we have used \eqref{a} and \eqref{PDEvarphi}.
Integration by parts and the fact that $\int_{I} \varphi^{2} \varphi_{s} ds =0 $ (this can be seen using integration by parts) yields
\begin{align*}
\frac{d}{dt} \left(\frac{1}{2} \int_{I} \varphi^{2} ds \right)  + \lambda \int_{I} (\partial_{s} \varphi)^{2} |f_{x}| ds &= \lambda \int_{I} \varphi_{s}\langle \vec{\kappa}, \vec{V} \rangle |f_{x}| ds
 +\frac{1}{2} \int_{I} \langle \vec{\kappa}, \vec{V} \rangle \varphi^{2} ds\\
 &=A_{1}+A_{2}.
\end{align*}
Using the $L^{\infty}$-bound on the curvature \eqref{boundk1} we can write
\begin{align*}
A_{1} &=\lambda \int_{I} \varphi_{s}\langle \vec{\kappa}, \vec{V} \rangle |f_{x}| ds \leq
\epsilon \lambda \int_{I} (\partial_{s} \varphi)^{2} |f_{x}| ds + \lambda C_{\epsilon} \int_{I}|\vec{V}|^{2} |f_{x}| ds\\
& \leq\epsilon \lambda \int_{I} (\partial_{s} \varphi)^{2} |f_{x}| ds +  \lambda C_{\epsilon} (1 + \| \varphi\|_{L^{2}}^{2}) \|\vec{V}\|_{L^{2}}^{2}
\end{align*}
where we have used \eqref{stella1} in the last step. For the second term $A_{2}$ we obtain,  using \eqref{boundk1}, \eqref{stella1}, and \eqref{V}, that
\begin{align*}
A_{2} &=\frac{1}{2} \int_{I} \langle \vec{\kappa}, \vec{V} \rangle \varphi^{2} ds
\leq C\int_{I} |P_{2}^{2,2} (\vec{\kappa})| \varphi^{2} ds +  C (1 +
\| |f_{x}|\|_{\infty}) \int_{I}\varphi^{2} ds \\
& \leq C  (\| |P_{2}^{2,2} (\vec{\kappa})| \|_{\infty} +1 + \| \varphi\|_{L^{2}}^{2} ) \int_{I}\varphi^{2} ds.
\end{align*}
Using \eqref{trickboundary2}, \eqref{interineq1sh2}, \eqref{star}, we obtain
\begin{align*}
|P_{2}^{2,2} (\vec{\kappa})| &\leq C \int_{I} |P_{2}^{3,3} (\vec{\kappa})|+ |P_{2}^{2,2} (\vec{\kappa})| ds 
 \leq C (
\| \nabla_{s}^{3} \vec{\kappa} \|_{L^{2}}^{2} +1 ) , 
\end{align*}
so that
\begin{align*}
A_{2} \leq C(1+ \| \nabla_{s}^{3} \vec{\kappa} \|_{L^{2}}^{2} + \| \varphi\|_{L^{2}}^{2} ) \int_{I}\varphi^{2} ds.
\end{align*}
Putting all estimates together we obtain (recall that $|f_{t}|^{2} =|\vec{V}|^{2} +\varphi^{2}$)
\begin{align*}
A_{1}+A_{2} \leq \epsilon \lambda \int_{I} (\partial_{s} \varphi)^{2} |f_{x}| ds + C_{\epsilon} \|\vec{V}\|_{L^{2}}^{2} + C_{\epsilon} (
\| \nabla_{s}^{3} \vec{\kappa} \|_{L^{2}}^{2} +1 + \|f_{t} \|_{L^{2}}^{2})  \int_{I}\varphi^{2} ds
\end{align*}
with $C_{\epsilon}= C_{\epsilon}(n, \lambda, \mathcal{D}_{\lambda}(f_{0}), f_{0})$.
Choosing $\epsilon$ appropriately we can write
\begin{align*}
\frac{d}{dt} &\left(\frac{1}{2} \int_{I} \varphi^{2} ds \right) + \frac{1}{2} \int_{I} \varphi^{2} ds   + \frac{\lambda}{2} \int_{I} (\partial_{s} \varphi)^{2} |f_{x}| ds \\ &\leq  C( \|\vec{V}\|_{L^{2}}^{2} + \| \varphi \|_{L^{2}}^{2}) +
C ( \| \nabla_{s}^{3} \vec{\kappa} \|_{L^{2}}^{2} + \|f_{t} \|_{L^{2}}^{2})  \int_{I}\varphi^{2} ds
\end{align*}
where $C =C(n, \lambda, \mathcal{D}_{\lambda}(f_{0}), f_{0})$.
Recalling \eqref{wichtig} and \eqref{boundk12}, a Gronwall argument (as performed in First Step -Part A)  gives
\begin{align}\label{boundphi}
\sup_{ t \in [0,T)} \| \varphi\|_{L^{2}}^{2}(t) \leq \tilde{C}_{1} = \tilde{C}_{1}( n, \lambda, \mathcal{D}_{\lambda}(f_{0}), f_{0}, T). 
\end{align}
Note that the dependence of the constant on $T$ is caused by \eqref{boundk12}.
As a consequence we obtain also
\begin{align}\label{aiuto2}
\sup_{ t \in [0,T)} \int_{0}^{t} \int_{I} (\partial_{s} \varphi)^{2} |f_{x}| ds dt' 
\leq \tilde{C}_{1} =\tilde{C}_{1}( n, \lambda, \mathcal{D}_{\lambda}(f_{0}), f_{0}, T).
\end{align}
Moreover recalling \eqref{stella1}, the definition of $\vec{w}$, and \eqref{boundk1} we can state
\begin{align}\label{boundw}
\sup_{ t \in [0,T)} \| \,|f_{x}| \|_{\infty}  +  \| \vec{w} \|_{\infty} \leq \tilde{C}_{1}=\tilde{C}_{1}( n, \lambda, \mathcal{D}_{\lambda}(f_{0}), f_{0}, T).
\end{align}
From the expression \eqref{nablaw} together with  the bounds \eqref{boundnablak}, \eqref{boundk1}, \eqref{boundphi} and \eqref{boundw} it follows
\begin{align}\label{boundnablaw}
\sup_{ t \in [0,T)} \| \nabla_{s}w\|_{L^{2}}^{2}(t) \leq \tilde{C}_{1}=\tilde{C}_{1}( n, \lambda, \mathcal{D}_{\lambda}(f_{0}), f_{0}, T).
\end{align}
Finally note that since $f_{xx}= |f_{x}|^{2} \vec{\kappa} + \frac{\varphi}{\lambda} |f_{x}| \tau $ we derive
\begin{align*}
\sup_{ t \in [0,T)}  \int_{I} |f_{xx}|^{2} dx \leq \tilde{C}_{1}=\tilde{C}_{1}( n, \lambda, \mathcal{D}_{\lambda}(f_{0}), f_{0}, T).
\end{align*}


\noindent \textbf{Intermezzo: bound from below for the length element $|f_{x}|$.}
In Section \ref{sec21} we  computed
\begin{align*}
\partial_{t} (|f_{x}|) 
& =\frac{\lambda}{|f_{x}|} (|f_{x}|)_{xx} + \lambda (|f_{x}|)_{x} \left( \frac{1}{|f_{x}|}\right)_{x}
- \langle \vec{\kappa}, \vec{V}\rangle |f_{x}|.
\end{align*}
Hence, using  \cite[Lemma 2.1.3]{mantegazza} and the uniform bound on the curvature \eqref{boundk1} we infer that $g(t):= \min_{I} |f_{x} (x,t)| $ is a positive map that satisfies 
\begin{align*}
g_{t} \geq- g \langle\vec{\kappa}, \vec{V} \rangle \geq  -C g  \| \vec{V}\|_{L^{\infty}}.
\end{align*}
 Since $\nabla_{s} \vec{V}= -\nabla_{s}^{3} \vec{\kappa} + P_{3}^{1,1}(\vec{\kappa}) +\lambda \nabla_{s}\vec{w}$, Lemma~\ref{hilfsatz}, \eqref{star},  \eqref{boundk1}, \eqref{boundnablak}, and \eqref{boundnablaw} yield 
 \begin{align}\label{aiutoV}
 \| \vec{V} \|_{\infty}  \leq C (\| \vec{V} \|_{L^{2}} + \| \nabla_{s} \vec{V} \|_{L^{2}})  \leq
 C (1+ \| \vec{V} \|_{L^{2}}^{2} + \| \nabla_{s}^{3} \vec{\kappa} \|_{L^{2}}^{2} ),
 \end{align}
 so that, upon recalling \eqref{boundk12} and \eqref{wichtig}, we can state that
 $$ \sup_{ t \in [0, T)} \int_{0}^{t} \| \vec{V} \|_{\infty}(t') dt' \leq C$$
 where $C=C ( n, \lambda, \mathcal{D}(f_{0}), f_{0}, T)$.
 Thus, integrating in time  the inequality
 $(\ln g)_{t} \geq -C  \| \vec{V}\|_{L^{\infty}}$
 gives 
 \begin{align*}
 g(t) \geq e^{-C} g(0)
 \end{align*}
 with $C=C ( n, \lambda, \mathcal{D}_{\lambda}(f_{0}), f_{0}, T)$. This yields 
 \begin{align}\label{3.11}
 \inf_{ t \in [0, T)} |f_{x}| \geq  C=C ( n, \lambda, \mathcal{D}_{\lambda}(f_{0}), f_{0}, T).
 \end{align}
An important consequence is that \eqref{aiuto2} now yields
\begin{align}\label{aiuto3}
\sup_{ t \in [0,T)} \int_{0}^{t} \int_{I} (\partial_{s} \varphi)^{2}  ds dt' 
\leq \tilde C_{1}= \tilde{C}_{1} ( n, \lambda, \mathcal{D}_{\lambda}(f_{0}), f_{0}, T).
\end{align}

\bigskip
\noindent \textbf{Induction step:}
Assume that for some  $ m \geq 1$ we have the following induction hypothesis: 
\begin{align} \label{IP} \tag{IP}
\left \{
\begin{array}{r} 
\sup_{t \in [0,T)} ( \sum_{j=0}^{m} \|\nabla_{s}^{j} \vec{\kappa} \|_{L^{2}}   
   + \int_{0}^{t} \| \nabla_{s}^{m+2} \vec{\kappa} \|_{L^{2}}^{2} dt' ) \leq C_{m}, \\
   \sup_{t \in [0,T)} ( \sum_{j=0}^{m-1} \| \partial_{s}^{j}\varphi \|_{L^{2}}  +\int_{0}^{t} \| \partial_{s}^{m}\varphi \|_{L^{2}}^{2} dt' + \sum_{j=0}^{m} \| \nabla_{s}^{j} \vec{w}\|_{L^{2}}) \leq \tilde{C}_{m}, 
   \end{array} \right.
   \end{align}
with $C_{m}=C_{m}( n, \lambda, \mathcal{D}(f_{0}), f_{0}, T) $, $\tilde{C}_{m}=\tilde{C}_{m}( n, \lambda, \mathcal{D}(f_{0}), f_{0}, T) $.

Note that by Lemma~\ref{hilfsatz} this means in particular that
\begin{align}\label{IP2} 
\left \{
\begin{array}{r} 
\sup_{t \in [0,T)} ( \sum_{j=0}^{m-1} \|\nabla_{s}^{j} \vec{\kappa} \|_{L^{\infty}}  + 
   \| \nabla_{s}^{m}\vec{\kappa}\|_{L^{2}}+ \int_{0}^{t} \| \nabla_{s}^{m+2} \vec{\kappa} \|_{L^{2}}^{2} dt' ) \leq C_{m}, \\
   \sup_{t \in [0,T)} ( \sum_{j=0}^{m-2} \| \partial_{s}^{j}\varphi \|_{L^{\infty}}  + 
   \| \partial_{s}^{m-1}\varphi\|_{L^{2}}+\int_{0}^{t} \| \partial_{s}^{m}\varphi \|_{L^{2}}^{2} dt') \leq \tilde{C}_{m}, \\
 \sup_{t\in[0,T)} (\| \nabla_{s}^{m} \vec{w}\|_{L^{2}} +  \sum_{j=0}^{m-1} \| \nabla_{s}^{j} \vec{w}\|_{L^{\infty}}) \leq \tilde{C}_{m}.
 \end{array} \right.
 \end{align}
 
\noindent \textbf{Induction Step - Part A:}
Using Lemma~\ref{lempartint} with $\vec{\phi}:=\nabla_{s}^{m+1}\vec{\kappa}$, Lemma~\ref{evolcurvature}, and exploiting the fact that $ \varphi \langle \nabla_{s}^{m+1} \vec{\kappa}, \nabla_{s}^{m+2} \vec{\kappa} \rangle +  \frac{1}{2} \varphi_{s} |\nabla_{s}^{m+1} \vec{\kappa}|^{2} = \partial_{s} \left(  \frac{1}{2} \varphi |\nabla_{s}^{m+1} \vec{\kappa}|^{2}\right)$
we obtain
\begin{align*}
\frac{d}{dt} &\left( \frac{1}{2}\int_{I} |\nabla_{s}^{m+1} \vec{\kappa}|^{2} ds\right) +
\int_{I} |\nabla_{s}^{m+3} \vec{\kappa}|^{2} ds + \frac{1}{2}\int_{I} |\nabla_{s}^{m+1} \vec{\kappa}|^{2} ds \\
&=\int_{I}\langle (\nabla_{t}+ \nabla_{s}^{4}) \nabla_{s}^{m+1} \vec{\kappa},\nabla_{s}^{m+1} \vec{\kappa} \rangle 
+ \frac{1}{2} \varphi_{s} |\nabla_{s}^{m+1} \vec{\kappa}|^{2} ds \\
& \quad - \frac{1}{2} \int_{I} |\nabla_{s}^{m+1} \vec{\kappa}|^{2} \langle  \vec{\kappa}, \vec{V} \rangle ds + \frac{1}{2}\int_{I} |\nabla_{s}^{m+1} \vec{\kappa}|^{2} ds\\
&= \int_{I} \langle  P^{m+3,m+3}_3 (\vec{\kappa})+ \lambda (\nabla_s^{m+3} \vec{w} + Q^{m+1,m+1}_3 (\vec{\kappa}, \vec{w})) +P^{m+1,m+1}_5 (\vec{\kappa}), \nabla_{s}^{m+1} \vec{\kappa} \rangle ds\\
& \quad - \frac{1}{2} \int_{I} |\nabla_{s}^{m+1} \vec{\kappa}|^{2} \langle  \vec{\kappa}, \vec{V} \rangle ds
+ \frac{1}{2}\int_{I} |\nabla_{s}^{m+1} \vec{\kappa}|^{2} ds
\\
& =\int_{I} P_{4}^{2m+4,m+3} (\vec{\kappa}) + P^{2m+2,m+1}_6 (\vec{\kappa})  + P_{2}^{2m+2,m+1} (\vec{\kappa})
+ P_{4}^{2m+4,m+1} (\vec{\kappa}) ds \\
& \quad+
 \lambda \int_{I} \langle \nabla_s^{m+3} \vec{w} + Q^{m+1,m+1}_3 (\vec{\kappa}, \vec{w}), \nabla_{s}^{m+1} \vec{\kappa} \rangle ds\\
 & = \int_{I}  P_{4}^{2m+4,m+3} (\vec{\kappa}) + P^{2m+2,m+1}_6 (\vec{\kappa})  + P_{2}^{2m+2,m+1} (\vec{\kappa})  ds \\
 & \quad +
 \lambda \int_{I} \langle \nabla_{s}^{m+1} \vec{w}, \nabla_{s}^{m+3} \vec{\kappa} \rangle ds + \lambda  \int_{I} \langle Q^{m+1,m+1}_3 (\vec{\kappa}, \vec{w}), \nabla_{s}^{m+1} \vec{\kappa} \rangle ds\\
& =J_{1}+J_{2}+J_{3},
\end{align*}
where we have used integration by parts in the last step and absorbed the terms $ P_{4}^{2m+4,m+1} (\vec{\kappa}) $ into $ P_{4}^{2m+4,m+3} (\vec{\kappa}) $.
Using \eqref{star} we find by applying Lemma~\ref{lemineqsh}
\begin{align*}
|J_{1}|&=|\int_{I}  P_{4}^{2m+4,m+3} (\vec{\kappa}) + P^{2m+2,m+1}_6 (\vec{\kappa})  + P_{2}^{2m+2,m+1} (\vec{\kappa})    ds |\\
& \leq \epsilon \int_{I} |\nabla_{s}^{m+3} \vec{\kappa}|^{2} ds + C(\epsilon, n, \mathcal{D}_{\lambda}(f_{0}) ) .
\end{align*}
Using \eqref{cs1} and the expression for $\varphi$ we can write for $m \geq 1$
\begin{align}\label{nablamw}
\nabla_{s}^{m+1} \vec{w}&= \nabla_{s}^{m+1}( |f_{x}| \vec{\kappa}) = \sum_{r=0}^{m+1}\binom{m+1}{r} \partial_{s}^{m+1-r} (|f_{x}|) \nabla_{s}^{r} \vec{\kappa}\notag \\
&=|f_{x}| \nabla_{s}^{m+1} \vec{\kappa} + 
\frac{d_{m}}{\lambda}\varphi \nabla_{s}^{m} \vec{\kappa} \notag\\
& \quad +\frac{1}{\lambda}\left(  
 d_{m-1} \partial_{s} \varphi  \nabla_{s}^{m-1} \vec{\kappa} 
 +    \ldots +d_{2} \partial_{s}^{m-2} \varphi\nabla_{s}^{2} \vec{\kappa} \right ) \notag \\
& \quad  + \frac{d_{1}}{\lambda}\partial_{s}^{m-1} \varphi \nabla_{s}\vec{\kappa}
 + \frac{1}{\lambda}\vec{\kappa}\partial_{s}^{m} \varphi 
\notag\\
& =|f_{x}| \nabla_{s}^{m+1} \vec{\kappa}+\frac{d_{m} }{\lambda}\varphi \nabla_{s}^{m} \vec{\kappa} + \frac{d_{1}}{\lambda}\partial_{s}^{m-1} \varphi \nabla_{s}\vec{\kappa}
 + \frac{1}{\lambda}\vec{\kappa}\partial_{s}^{m} \varphi  +\frac{1}{\lambda} W
\end{align}
(for appropriate coefficients $d_{j}$ which we do not  specify for notation purposes)
where 
$$ W:=   d_{m-1} \partial_{s} \varphi  \nabla_{s}^{m-1} \vec{\kappa}  + \ldots +d_{2} \partial_{s}^{m-2} \varphi\nabla_{s}^{2} \vec{\kappa} $$
and with the convention that $W=0$ if $m=1,2$. Note that if $m \geq 3$ then $|W| \leq C$ by \eqref{IP2}, so in what follows we can treat $W$ as a bounded term.
Therefore we can write
\begin{align*}
J_{2} &=\lambda \int_{I} \langle \nabla_{s}^{m+1} \vec{w}, \nabla_{s}^{m+3} \vec{\kappa} \rangle ds
=\lambda \int_{I} |f_{x}| \langle \nabla_{s}^{m+1} \vec{\kappa} ,  \nabla_{s}^{m+3} \vec{\kappa}\rangle ds +
 \int_{I}  \partial_{s}^{m}\varphi \langle   \vec{\kappa},  \nabla_{s}^{m+3} \vec{\kappa}\rangle ds\\
 &\quad 
 + \int_{I}  d_{1} \partial_{s}^{m-1}\varphi  \langle \nabla_{s} \vec{\kappa},  \nabla_{s}^{m+3} \vec{\kappa}\rangle ds 
 +\int_{I} d_{m} \varphi \langle \nabla_{s}^{m}  \vec{\kappa}, \nabla_{s}^{m+3} \vec{\kappa} \rangle ds+
 \int_{I} \langle W,\nabla_{s}^{m+3} \vec{\kappa}\rangle ds  \\
 &=J_{2,1}+J_{2,2} + J_{2,3} + J_{2,4} + J_{2,5}.
\end{align*}
Using \eqref{star}, \eqref{boundw}, \eqref{IP2} , Young inequality, and Lemma~\ref{lemineqsh},  one can  verify that
\begin{align*}
J_{2,1} + J_{2,5} \leq \epsilon \int_{I} |\nabla_{s}^{m+3} \vec{\kappa}|^{2} ds + C(\epsilon, n, \mathcal{D}_{\lambda}(f_{0}), f_{0},T ) .
\end{align*}
Young inequality, \eqref{boundk1}, \eqref{IP}, \eqref{IP2}, and Lemma~\ref{hilfsatz} yield
\begin{align*}
J_{2,2}& + J_{2,3} + J_{2,4}\\
 &\leq  \epsilon \int_{I} |\nabla_{s}^{m+3} \vec{\kappa}|^{2} ds + C_{\epsilon} 
(\| \partial_{s}^{m} \varphi \|_{L^{2}}^{2} + \| \partial_{s}^{m-1}  \varphi\|_{L^{\infty}}^{2}
\| \nabla_{s} \vec{\kappa} \|_{L^{2}}^{2} + \|   \varphi\|_{L^{\infty}}^{2}
\| \nabla_{s}^{m} \vec{\kappa} \|_{L^{2}}^{2} 
)\\
&\leq  \epsilon \int_{I} |\nabla_{s}^{m+3} \vec{\kappa}|^{2} ds + C_{\epsilon} 
(\| \partial_{s}^{m} \varphi \|_{L^{2}}^{2} + \| \partial_{s}^{m-1}  \varphi\|_{L^{2}}^{2}
 + \|   \varphi\|_{L^{2}}^{2} +  \|  \partial_{s} \varphi\|_{L^{2}}^{2}
)\\
&\leq \epsilon \int_{I} |\nabla_{s}^{m+3} \vec{\kappa}|^{2} ds + C_{\epsilon} 
(\| \partial_{s}^{m} \varphi \|_{L^{2}}^{2} +\| \partial_{s}^{m-1} \varphi \|_{L^{2}}^{2} + 1).
\end{align*}
Next, observe that (neglecting here for simplicity the coefficients multiplying each term)
\begin{align*}
Q^{m+1,m+1}_3 (\vec{\kappa}, \vec{w}) &= \nabla_{s}^{m+1} \vec{w} *\vec{\kappa}* \vec{\kappa} +
\nabla_{s}^{m} \vec{w} *\vec{\kappa}* \nabla_{s}\vec{\kappa} + R_{m+1} + \vec{w} * \nabla_{s}\vec{\kappa}* \nabla_{s}^{m}\vec{\kappa} \\
& \quad + \nabla_{s}\vec{w} * \nabla_{s}^{m}\vec{\kappa}* \vec{\kappa}   +  \vec{w} * \vec{\kappa}* \nabla_{s}^{m+1}\vec{\kappa} 
\end{align*}
where $ R_{m+1} $ contains all terms of type $\nabla_{s}^{i_{1}} \vec{w} * \nabla_{s}^{i_{2}}\vec{\kappa} * \nabla_{s}^{i_{2}}\vec{\kappa}$ with $i_{1}+i_{2}+i_{3} =m+1$ and $i_{1} \leq m-1$, $i_{2}, i_{3} \leq m-1$ (In case $m=1$, $R_{m+1}=0$). Due to \eqref{IP2} we see that $|R_{m+1}| \leq C$.
Thus, using \eqref{IP2} we can write
\begin{align*}
|\lambda\langle Q^{m+1,m+1}_3 (\vec{\kappa}, \vec{w}), \nabla_{s}^{m+1} \vec{\kappa} \rangle |
&\leq C(|\nabla_{s}^{m+1}\vec{w}||\nabla_{s}^{m+1} \vec{\kappa}|+ 
|\nabla_{s}^{m} \vec{w} *\vec{\kappa}* \nabla_{s}\vec{\kappa}* \nabla_{s}^{m+1} \vec{\kappa}|\\& +
|\nabla_{s}^{m+1} \vec{\kappa}|
+ |\vec{w} * \nabla_{s}\vec{\kappa}* \nabla_{s}^{m}\vec{\kappa} * \nabla_{s}^{m+1} \vec{\kappa}|\\
&
 + |\nabla_{s}\vec{w} * \nabla_{s}^{m}\vec{\kappa}* \vec{\kappa} *\nabla_{s}^{m+1} \vec{\kappa}| +|\nabla_{s}^{m+1} \vec{\kappa}|^{2} 
).
\end{align*}
Taking into account the expression \eqref{nablamw} derived above, \eqref{boundw}, and \eqref{IP2} we obtain
\begin{align*}
|\nabla_{s}^{m+1}\vec{w}||\nabla_{s}^{m+1} \vec{\kappa}| &\leq C (|\nabla_{s}^{m+1} \vec{\kappa}|^{2} + | \varphi| |\nabla_{s}^{m} \vec{\kappa}| |\nabla_{s}^{m+1} \vec{\kappa}| + |\nabla_{s}^{m+1} \vec{\kappa}| \\
&\qquad+ |\partial_{s}^{m-1} \varphi||\nabla_{s} \vec{\kappa} ||\nabla_{s}^{m+1} \vec{\kappa}| + |\partial_{s}^{m}\varphi| |\nabla_{s}^{m+1} \vec{\kappa}|)\\
& \leq C (|\nabla_{s}^{m+1} \vec{\kappa}|^{2} + \| \varphi \|_{\infty}^{2} |\nabla_{s}^{m} \vec{\kappa}|^{2} + 1 +  \| \partial_{s}^{m-1}\varphi \|_{\infty}^{2} |\nabla_{s} \vec{\kappa}|^{2} 
+ | \partial_{s}^{m} \varphi |^{2}).
\end{align*}
Similarly
\begin{align*}
|\nabla_{s}\vec{w} * \nabla_{s}^{m}\vec{\kappa}* \vec{\kappa} *\nabla_{s}^{m+1} \vec{\kappa}| &\leq C(\| \varphi \|_{\infty}^{2} |\nabla_{s}^{m} \vec{\kappa}|^{2} + |\nabla_{s}^{m+1} \vec{\kappa}|^{2} + |P_{3}^{2m+2, m+1}(\vec{\kappa})|
),\\
|\vec{w} * \nabla_{s}\vec{\kappa}* \nabla_{s}^{m}\vec{\kappa} * \nabla_{s}^{m+1} \vec{\kappa}| &\leq C|P_{3}^{2m+2, m+1}(\vec{\kappa})|,
\end{align*}
and for $m \geq 2$ (note that if $m=1$ the following term has already been dealt with, since for $m=1$ we have $\nabla_{s}^{m} \vec{w} *\vec{\kappa}* \nabla_{s}\vec{\kappa}* \nabla_{s}^{m+1} \vec{\kappa}= \nabla_{s}\vec{w} * \nabla_{s}^{m}\vec{\kappa}* \vec{\kappa} *\nabla_{s}^{m+1} \vec{\kappa}$ )
 \begin{align*}
 |\nabla_{s}^{m}& \vec{w} *\vec{\kappa}* \nabla_{s}\vec{\kappa}* \nabla_{s}^{m+1} \vec{\kappa}|\\
 & \leq
 C (|P_{3}^{2m+2, m+1}(\vec{\kappa})| + |\varphi| |\nabla_{s}^{m-1}\vec{\kappa}|| \nabla_{s}^{m+1}\vec{\kappa}| +|\nabla_{s}^{m+1}\vec{\kappa}| + |\partial_{s}^{m-1} \varphi|| \nabla_{s}^{m+1}\vec{\kappa}|)\\
 & \leq C (|P_{3}^{2m+2, m+1}(\vec{\kappa})| +
 \| \varphi \|_{\infty}^{2} |\nabla_{s}^{m-1} \vec{\kappa}|^{2} + 1 +  |\nabla_{s}^{m+1} \vec{\kappa}|^{2} 
+ | \partial_{s}^{m-1} \varphi |^{2} )
 \end{align*}
so that using  \eqref{IP}, \eqref{IP2}, \eqref{star},  Lemma~\ref{lemineqsh}  and Lemma~\ref{hilfsatz} we obtain that
\begin{align*}
J_{3} &=\int_{I}\lambda\langle Q^{m+1,m+1}_3 (\vec{\kappa}, \vec{w}), \nabla_{s}^{m+1} \vec{\kappa} \rangle  ds \leq \epsilon \int_{I} |\nabla_{s}^{m+3} \vec{\kappa}|^{2} ds + C_{\epsilon}\\
& \quad + C( \| \varphi\|_{L^{2}}^{2} + \| \partial_{s} \varphi\|_{L^{2}}^{2}+
\| \partial_{s}^{m-1}\varphi\|_{L^{2}}^{2} + \|\partial_{s}^{m}  \varphi\|_{L^{2}}^{2})\\
& \leq \epsilon \int_{I} |\nabla_{s}^{m+3} \vec{\kappa}|^{2} ds + C_{\epsilon} + C(\| \partial_{s}^{m-1}\varphi\|_{L^{2}}^{2} + \|\partial_{s}^{m}  \varphi\|_{L^{2}}^{2}).
\end{align*}
Collecting the estimates for $J_{1}$, $J_{2}$ and $J_{3}$ we find
 \begin{align*}
 \frac{d}{dt} &\left( \frac{1}{2}\int_{I} |\nabla_{s}^{m+1} \vec{\kappa}|^{2} ds\right) +
\int_{I} |\nabla_{s}^{m+3} \vec{\kappa}|^{2} ds + \frac{1}{2}\int_{I} |\nabla_{s}^{m+1} \vec{\kappa}|^{2} ds \\
&= J_{1}+ J_{2}+J_{3} \leq \epsilon \int_{I} |\nabla_{s}^{m+3} \vec{\kappa}|^{2} ds  + C_{\epsilon} ( 1 + \| \partial_{s}^{m-1}\varphi \|_{L^{2}}^{2}+ \| \partial_{s}^{m}\varphi \|_{L^{2}}^{2}).
 \end{align*}
 Choosing $\epsilon$ appropriately and  using   \eqref{IP}, a Gronwall Lemma yields
 \begin{align}\label{boundkm}
 \sup_{ t \in [0,T)} \| \nabla_{s}^{m+1} \vec{\kappa} \|_{L^{2}}  + \sup_{ t \in [0,T)} \int_{0}^{t} \| \nabla_{s}^{m+3} \vec{\kappa}\|_{L^{2}}^{2} dt'\leq C_{m+1} 
 \end{align}
 with $C_{m+1}=C_{m+1}(n,\lambda, \mathcal{D}_{\lambda}(f_{0}), T, f_{0}).
$
Together with Lemma~\ref{hilfsatz} and \eqref{IP} we also infer that
 \begin{align}\label{318}
 \sup_{ t \in [0,T)} \| \nabla_{s}^{m} \vec{\kappa} \|_{L^{\infty}}   \leq C_{m+1}=C_{m+1}(n,\lambda, \mathcal{D}_{\lambda}(f_{0}), T, f_{0}).
  \end{align}
 
\noindent \textbf{Induction Step - Part B:}
We compute
 \begin{align*}
&\frac{d}{dt} \left(\frac{1}{2} \int_{I} (\partial_{s}^{m}\varphi)^{2} ds \right)=
\int_{I} \partial_{s}^{m}\varphi\,  \partial_{t} \partial_{s}^{m}\varphi ds + \frac{1}{2} \int_{I} (\partial_{s}^{m}\varphi)^{2} (\varphi_{s} - \langle \vec{\kappa}, \vec{V} \rangle) ds\\
& = \int_{I} \partial_{s}^{m}\varphi \left(
(\lambda |f_{x}|  \partial_{s}^{m+1} \varphi )_{s} 
- (  \lambda |f_{x}| \partial_{s}^{m} (\langle \vec{\kappa}, \vec{V} \rangle))_{s} +M_{2}^{m,m} (\langle\vec{\kappa}, \vec{V}\rangle, \varphi) +B_{2}^{m+1,m+1} (\varphi)
 \right) ds \\
&\quad + \frac{1}{2} \int_{I} (\partial_{s}^{m}\varphi)^{2} (\varphi_{s} - \langle \vec{\kappa}, \vec{V} \rangle) ds
\end{align*}
where we have used \eqref{a} and \eqref{PDEvarphims}. Integration by parts gives
\begin{align*}
 &\frac{d}{dt} \left(\frac{1}{2} \int_{I} (\partial_{s}^{m}\varphi)^{2} ds \right) + \int_{I} \lambda |f_{x}| (\partial_{s}^{m+1}\varphi)^{2}  ds= \int_{I} (\partial_{s}^{m+1}\varphi)   \lambda |f_{x}| \partial_{s}^{m} (\langle \vec{\kappa}, \vec{V} \rangle)  ds \\
& \qquad  +\int_{I}  (\partial_{s}^{m}\varphi ) \, M_{2}^{m,m} (\langle\vec{\kappa}, \vec{V}\rangle, \varphi) ds   +\int_{I}  (\partial_{s}^{m}\varphi ) \,B_{2}^{m+1,m+1} (\varphi)  ds =A_{1}+A_{2}+A_{3}.
 \end{align*}
 Note that exploiting the bound from below for the length element \eqref{3.11} we can write
 \begin{align}\label{eqfin}
 &\frac{d}{dt} \left(\frac{1}{2} \int_{I} (\partial_{s}^{m}\varphi)^{2} ds \right) + C\int_{I} (\partial_{s}^{m+1}\varphi)^{2}  ds + \frac{1}{2}\int_{I} \lambda |f_{x}| (\partial_{s}^{m+1}\varphi)^{2}  ds  \notag\\
 &\leq \int_{I} (\partial_{s}^{m+1}\varphi)   \lambda |f_{x}| \partial_{s}^{m} (\langle \vec{\kappa}, \vec{V} \rangle)  ds \notag 
 +\int_{I}  (\partial_{s}^{m}\varphi ) \, M_{2}^{m,m} (\langle\vec{\kappa}, \vec{V}\rangle, \varphi) ds  \notag  \\
 & \quad +\int_{I}  (\partial_{s}^{m}\varphi ) \,B_{2}^{m+1,m+1} (\varphi)  ds =A_{1}+A_{2}+A_{3}.
 \end{align}
 Observing that by  \eqref{IP2} and \eqref{318}
 \begin{align*}
 |\partial_{s}^{m} (\langle \vec{\kappa}, \vec{V} \rangle)| &\leq C (|\vec{V}| + \ldots  +|\nabla_{s}^{m} \vec{V}|) \leq C (1 +|\nabla_{s}^{m+1} \vec{\kappa}| +|\nabla_{s}^{m+2} \vec{\kappa}| + |\nabla_{s}^{m}\vec{w}|)\\
|\partial_{s}^{m-1} (\langle \vec{\kappa}, \vec{V} \rangle)| &\leq C (1+ |\nabla_{s}^{m+1} \vec{\kappa}|)\\
|\partial_{s}^{m-2} (\langle \vec{\kappa}, \vec{V} \rangle)| &\leq C,
 \end{align*}
and recalling \eqref{boundw} we derive immediately that
 \begin{align*}
 A_{1} &\leq \epsilon \int_{I} \lambda |f_{x}| (\partial_{s}^{m+1}\varphi)^{2}  ds + C_{\epsilon}
 \int_{I} (1 +|\nabla_{s}^{m+1} \vec{\kappa}|^{2} +|\nabla_{s}^{m+2} \vec{\kappa}|^{2} + |\nabla_{s}^{m}\vec{w}|^{2}) ds \\
 &\leq \epsilon \int_{I} \lambda |f_{x}| (\partial_{s}^{m+1}\varphi)^{2}  ds + C_{\epsilon} (1 + 
 \| \nabla_{s}^{m+2} \vec{\kappa} \|_{L^{2}}^{2})
 \end{align*}
 where we have used the induction hypthesis, \eqref{star},  and \eqref{boundkm} in the last step.
 Using the definition of $M^{m,m}_{2}$, the calculation above for $|\partial_{s}^{m} (\langle \vec{\kappa}, \vec{V} \rangle)|$, \eqref{318}, and \eqref{IP2}
 we infer that for any $m \geq 1$ there holds
 \begin{align*}
 |M_{2}^{m,m} (\langle\vec{\kappa}, \vec{V}\rangle, \varphi) |& \leq C (1 +|\nabla_{s}^{m+1} \vec{\kappa}| +|\nabla_{s}^{m+2} \vec{\kappa}| + |\nabla_{s}^{m}\vec{w}|) |\varphi| \\
 & \quad +
 C (1 + |\nabla_{s}^{m+1} \vec{\kappa}|) |\partial_{s} \varphi| + C + C( |\partial_{s}^{m-1} \varphi| + |\partial_{s}^{m} \varphi|).
 \end{align*}
 Thus together with Lemma~\ref{hilfsatz}, \eqref{IP2}, and \eqref{boundkm} we obtain
 \begin{align*}
 A_{2} &\leq C \big(\| \varphi \|_{L^{2}} \| \partial_{s}^{m}\varphi\|_{L^{2}} +\| \varphi \|_{L^{2}}\| \partial_{s}^{m}\varphi\|_{L^{2}}  \left[ \| \nabla_{s}^{m+1}\vec{\kappa}\|_{L^{\infty}} +  \| \nabla_{s}^{m+2}\vec{\kappa}\|_{L^{\infty}} \right]\\
& \quad  +\| \varphi \|_{L^{\infty}}\| \partial_{s}^{m}\varphi\|_{L^{2}} 
  \| \nabla_{s}^{m}\vec{w}\|_{L^{2}}  \big)\\
& \quad  + C \big( \left[\| \partial_{s}\varphi \|_{L^{2}}  + \| \partial_{s}^{m-1}\varphi \|_{L^{2}} \right] \| \partial_{s}^{m}\varphi\|_{L^{2}} \big) + C\| \partial_{s}^{m}\varphi\|_{L^{2}}^{2} \\
& \quad + C\| \nabla_{s}^{m+1}\vec{\kappa}\|_{L^{\infty}} \| \partial_{s}^{m}\varphi\|_{L^{2}} \| \partial_{s}\varphi\|_{L^{2}}
+C\\
& \leq C + C( \| \varphi \|_{L^{2}}^{2} + \| \partial_{s}\varphi \|_{L^{2}}^{2} + \| \partial_{s}^{m-1}\varphi\|_{L^{2}}^{2} +\| \partial_{s}^{m}\varphi\|_{L^{2}}^{2} ) \\& \quad +
C \| \varphi \|_{L^{2}}\| \partial_{s}^{m}\varphi\|_{L^{2}}  [\| \nabla_{s}^{m+1}\vec{\kappa}\|_{L^{2}} +  \| \nabla_{s}^{m+2}\vec{\kappa}\|_{L^{2}} +  \| \nabla_{s}^{m+3}\vec{\kappa}\|_{L^{2}}]
\\& \quad + C\| \partial_{s}^{m}\varphi\|_{L^{2}} \| \partial_{s}\varphi\|_{L^{2}}
(\| \nabla_{s}^{m+1}\vec{\kappa}\|_{L^{2}} + \| \nabla_{s}^{m+2}\vec{\kappa}\|_{L^{2}}  )\\
& \leq  C + C(\| \partial_{s}^{m-1}\varphi\|_{L^{2}}^{2} +\| \partial_{s}^{m}\varphi\|_{L^{2}}^{2} )
+ C (\| \nabla_{s}^{m+2}\vec{\kappa}\|_{L^{2}}^{2} +  \| \nabla_{s}^{m+3}\vec{\kappa}\|_{L^{2}}^{2}
) \int_{I} |\partial_{s}^{m}\varphi|^{2} ds.
 \end{align*}
 Finally, recalling the definition of $B^{m+1,m+1}_{2}(\varphi)$ we see that every term appearing in $ A_{3}$ is of type
 \begin{align*}
 \int_{I} (\partial_{s}^{m} \varphi) (\partial_{s}^{i_{1}} \varphi) (\partial_{s}^{i_{2}} \varphi) ds 
 \end{align*}
 with $i_{1}+i_{2} = m+1$ and $0 \leq i_{j} \leq m+1$.
 Each such term can be estimated using Lemma~\ref{hilfsatz}, \eqref{star} and  \eqref{IP2} as follows:
 if $i_{1}=m+1$ (hence $i_{2}=0$) then
\begin{align*}
 |\int_{I} (\partial_{s}^{m} \varphi) (\partial_{s}^{m+1} \varphi)  \,\varphi ds | & 
 \leq  
\epsilon\|\partial_{s}^{m+1} \varphi \|_{L^{2}}^{2} + C_{\epsilon}\| \varphi \|_{L^{\infty}}^{2} 
\int_{I} |\partial_{s}^{m} \varphi|^{2} ds\\
 &\leq  
\epsilon\|\partial_{s}^{m+1} \varphi \|_{L^{2}}^{2} + C_{\epsilon}(\| \varphi \|_{L^{2}}^{2} +\|\partial_{s} \varphi \|_{L^{2}}^{2})
\int_{I} |\partial_{s}^{m} \varphi|^{2} ds\\
&\leq  
\epsilon\|\partial_{s}^{m+1} \varphi \|_{L^{2}}^{2} + C_{\epsilon}(1 +\|\partial_{s}^{m} \varphi \|_{L^{2}}^{2})
\int_{I} |\partial_{s}^{m} \varphi|^{2} ds
 \end{align*} 
 where we have used \eqref{IP2} in the last step (note that for $m \geq 2$ we actually have  $\|\partial_{s} \varphi \|_{L^{2}}^{2} \leq C$).
 If $i_{1}=m$, $i_{2}=1$  with $m \geq 2$ then we write  using Lemma~\eqref{hilfsatz} and \eqref{IP2}
 \begin{align*}
 \int_{I} (\partial_{s}^{m} \varphi) (\partial_{s}^{m} \varphi)  (\partial_{s}\varphi) ds &\leq \|\partial_{s}\varphi\|_{L^{\infty}}\int_{I} |\partial_{s}^{m} \varphi|^{2} ds
\leq C (\|\partial_{s}\varphi\|_{L^{2}} +\|\partial_{s}^{2}\varphi\|_{L^{2}})\int_{I} |\partial_{s}^{m} \varphi|^{2} ds
\\
& \leq C(1 + \|\partial_{s}^{m-1}\varphi\|_{L^{2}}^{2} +\|\partial_{s}^{m}\varphi\|_{L^{2}}^{2})\int_{I} |\partial_{s}^{m} \varphi|^{2} ds.
 \end{align*}
 On the other hand if  $i_{1}=m$, $i_{2}=1$  with $m =1$ then integration by parts yields
 \begin{align*}
 \int_{I} (\partial_{s}^{m} \varphi) (\partial_{s}^{m} \varphi)  (\partial_{s}\varphi) ds &=
 \int_{I} (\partial_{s}^{m}\varphi)^{3} ds =- 2\int_{I} (\partial_{s}^{m} \varphi) (\partial_{s}^{m-1} \varphi) (\partial_{s}^{m+1} \varphi) ds 
 \\ 
 & \leq \epsilon\|\partial_{s}^{m+1} \varphi \|_{L^{2}}^{2} + C_{\epsilon} \| \partial_{s}^{m-1} \varphi\|_{L^{\infty}}^{2} \int_{I} |\partial_{s}^{m} \varphi|^{2} ds\\
 &\leq \epsilon\|\partial_{s}^{m+1} \varphi \|_{L^{2}}^{2} + C_{\epsilon}(\| \partial_{s}^{m-1} \varphi\|_{L^{2}}^{2} +\|\partial_{s}^{m} \varphi \|_{L^{2}}^{2})
\int_{I} |\partial_{s}^{m} \varphi|^{2} ds.
 \end{align*}
 If $i_{1}=m-1$, $i_{2}=2$ and $m=1$ then similar arguments as above yield
 \begin{align*}
 \int_{I} (\partial_{s}^{m} \varphi) (\partial_{s}^{m-1} \varphi)  (\partial_{s}^{2}\varphi) ds&=
 \int_{I} (\partial_{s}^{m} \varphi) (\partial_{s}^{m-1} \varphi)  (\partial_{s}^{m+1}\varphi) ds\\
  &\leq \epsilon\|\partial_{s}^{m+1} \varphi \|_{L^{2}}^{2} + C_{\epsilon}(\| \partial_{s}^{m-1} \varphi\|_{L^{2}}^{2} +\|\partial_{s}^{m} \varphi \|_{L^{2}}^{2})
\int_{I} |\partial_{s}^{m} \varphi|^{2} ds.
 \end{align*}
 On the other hand if $i_{1}=m-1$, $i_{2}=2$ and $m \geq2$ then
 \begin{align*}
 \int_{I} (\partial_{s}^{m} \varphi) (\partial_{s}^{m-1} \varphi)  (\partial_{s}^{2}\varphi) ds&\leq
 \|\partial_{s}^{2}\varphi \|_{L^{2}}^{2} + \|\partial_{s}^{m-1}\varphi\|_{L^{\infty}}^{2}\int_{I} |\partial_{s}^{m} \varphi|^{2} ds\\
 & \leq C + C(1+\|\partial_{s}^{m-1}\varphi\|_{L^{2}}^{2} +\|\partial_{s}^{m}\varphi\|_{L^{2}}^{2})\int_{I} |\partial_{s}^{m} \varphi|^{2} ds.
 \end{align*}
 Finally if $i_{1} \leq m-2$  $i_{2}=m+1-i_{1}$, then by \eqref{IP2} we know that $| \partial_{s}^{i_{1}} \varphi| \leq C$ and therefore we can write
 \begin{align*}
 |\int_{I} (\partial_{s}^{m} \varphi) (\partial_{s}^{i_{1}} \varphi) (\partial_{s}^{i_{2}} \varphi) ds | &\leq C \|\partial_{s}^{i_{2}} \varphi \|_{L^{2}} 
\|\partial_{s}^{m} \varphi \|_{L^{2}}  
 \leq  \|\partial_{s}^{i_{2}} \varphi \|_{L^{2}}^{2} +
\int_{I} |\partial_{s}^{m} \varphi|^{2} ds\\
& \leq C  +
\int_{I} |\partial_{s}^{m} \varphi|^{2} ds
 \end{align*}
 where we have taken \eqref{IP} and $i_{2} < m$ into account.
 According to all considerations outlined so far we can state that
 \begin{align*}
 |A_{3}| &\leq 
\epsilon\|\partial_{s}^{m+1} \varphi \|_{L^{2}}^{2} + C_{\epsilon}(1+\| \partial_{s}^{m-1} \varphi\|_{L^{2}}^{2} +\|\partial_{s}^{m} \varphi \|_{L^{2}}^{2})
\int_{I} |\partial_{s}^{m} \varphi|^{2} ds 
 +
 C.
  \end{align*}
Fron \eqref{eqfin} together with the obtained estimates for $A_{1}$, $A_{2}$, $A_{3}$, \eqref{IP},
 and choosing $\epsilon$ appropriately we obtain
\begin{align*}
 &\frac{d}{dt} \left(\frac{1}{2} \int_{I} (\partial_{s}^{m}\varphi)^{2} ds \right) + C\int_{I} (\partial_{s}^{m+1}\varphi)^{2}  ds \\
& \leq  C(1 + 
 \| \nabla_{s}^{m+2} \vec{\kappa} \|_{L^{2}}^{2}) + C ( 1 +\|\partial_{s}^{m} \varphi \|_{L^{2}}^{2} +\| \nabla_{s}^{m+2}\vec{\kappa}\|_{L^{2}}^{2} +  \| \nabla_{s}^{m+3}\vec{\kappa}\|_{L^{2}}^{2}
) \int_{I} |\partial_{s}^{m}\varphi|^{2} ds.
  \end{align*}
A Gronwall argument that takes into account \eqref{IP} and \eqref{boundkm} finally yields
\begin{align*}
\sup_{t \in [0,T)} (  \| \partial_{s}^{m}\varphi \|_{L^{2}}  +\int_{0}^{t} \| \partial_{s}^{m+1}\varphi \|_{L^{2}}^{2} dt' ) \leq \tilde{C}_{m+1}, 
   \end{align*}
with  $\tilde{C}_{m+1}=\tilde{C}_{m+1}( n, \lambda, \mathcal{D}_{\lambda}(f_{0}), f_{0}, T) $.
The uniform $L^{2}$-bound for $\nabla_{s}^{m+1}\vec{w}$ follows from \eqref{nablamw} and the uniform bounds obtained so far. The induction step is now completed.

\bigskip

\noindent \textbf{Final steps:} As observed in \cite[Thm 3.1]{DKS}, for a function $h:I \to \R$ we have that  $\partial_{x}^{m} h =  |f_{x}|^{m} \partial_{s}^{m} h + P_{m} (|f_{x}|, \ldots, \partial_{x}^{m-1}(|f_{x}|), h, \ldots, \partial_{s}^{m-1}h)$ where $P_{m}$ is a polynomial. 
With $h=|f_{x}|$ and \eqref{tc} it follows
$\partial_{x}^{m} (|f_{x}|) =  \frac{1}{\lambda}|f_{x}|^{m} \partial_{s}^{m-1} \varphi  + P_{m} (|f_{x}|, \ldots, \partial_{x}^{m-1}(|f_{x}|), \varphi, \ldots, \partial_{s}^{m-2}\varphi)$, so that 
 taking into account \eqref{boundw},   and the uniform bounds obtained for $\varphi$ and its  derivatives (recall that \eqref{IP2} holds for any $m$),  we obtain uniform bounds for the derivatives of the length element in the original parametrization.
Similarly, using Lemma~\ref{lememb}  and \eqref{IP2} we can also state that on $[0, T)$  we have for any $m \in \mathbb{N}$
$$\| \partial_{x}^{m} f \|_{L^{\infty}} \leq  C(m, n, \lambda, \mathcal{D}(f_{0}), f_{0}, T). $$
Moreover $\| f \|_{L^{\infty}} \leq C(n, \lambda, \mathcal{D}(f_{0}), f_{0}, T)$. Therefore we can extend $f$ smoothly over $[0,T] \times I$ and  even beyond by short-time existence contradicting the maximality of $T$. This proves that the flow exists globally in time.




\bibliography{refNum}
\bibliographystyle{acm}

\end{document}